\newcommand*{\myproofname}{Proof}
\newcommand*{\R}{\mathbb{R}}
\newcommand*{\prob}{\mathbb{P}}
\newcommand*{\freen}{\mathcal{F}}
\renewcommand{\norm}[1]{\lVert#1\rVert}
\newtheorem{theorem}{Theorem}
\newtheorem{lemma}[theorem]{Lemma}
\newtheorem{corollary}[theorem]{Corollary}
\theoremstyle{definition}
\newtheorem{definition}{Definition}
\newtheorem{remark}{Remark}
\DeclareMathOperator{\diag}{diag}
\DeclareMathOperator{\dist}{dist}
\title[WSM and SSM for the anti-ferromagnetic Potts model on trees]{Near optimal bounds for weak and strong spatial mixing for the anti-ferromagnetic Potts model on trees}
\date{\today}
\thanks{This research was supported by the Netherlands Organisation of Scientific Research (NWO): VI.Vidi.193.068.}
\author{Ferenc Bencs}
\email{\texttt{ferenc.bencs@gmail.com}}
\author{Khallil Berrekkal}
\email{\texttt{k.berrekkal@uva.nl}}
\author{Guus Regts}
\email{\texttt{guusregts@gmail.com}}
\address{Korteweg de Vries Institute for Mathematics, University of Amsterdam, P.O. Box 94248 
1090 GE Amsterdam, The Netherlands.}
\begin{document}
\maketitle

\begin{abstract}
We show that the anti-ferromagnetic Potts model on trees exhibits strong spatial mixing for a near-optimal range of parameters.
Our work complements recent results of Chen, Liu, Mani, and Moitra~\cite{chen2023strong} who showed this to be true in the infinite temperature setting, corresponding to uniform proper colorings. 
We furthermore prove weak spatial mixing results complementing results in~\cite{chen2023strong}.
\\
\quad \\
\noindent
{\bf Keywords:} Potts model, weak spatial mixing, strong spatial mixing, anti-ferromagnetic, infinite regular tree
\end{abstract}
	
\section{Introduction}
Consider a uniformly random proper $q$-coloring of the vertices of the (infinite) rooted $d$-ary tree conditioned on a given proper coloring at distance $h$ from the root. 
Is the marginal distribution of the root vertex close to the uniform distribution?
The answer to this question turns out to depend on the relation between $q$ and $d$: if $q\leq d+1$, then the answer could be `no' depending on the given coloring, while if $q>d+1$, the answer is `yes' for any given coloring, as was shown about twenty years ago by Jonasson~\cite{Jonasson2002}. 
This property is often referred to as \emph{weak spatial mixing} in computer science; it implies uniqueness of the Gibbs measure on the infinite $(d+1)$-regular tree~\cite{BW2002}. 
The stronger property known as \emph{strong spatial mixing}, which requires that the marginal distributions of the root vertex for any two arbitrary partial colorings of the tree are close in terms of the distance to the nearest disagreement of the given partial colorings (see below for a precise definition), turned out to be more difficult to establish. 
In a very recent breakthrough it was shown by Chen, Liu, Mani, and Moitra~\cite{chen2023strong}, that strong spatial mixing holds, provided $q\geq d+4$, thereby significantly improving on earlier results~\cite{GamarniketalSSM,GalanisetalimprovedSSM} where $q$ was required to be larger than $1.58d$ to conclude strong spatial mixing.

In the present paper, we consider the anti-ferromagnetic Potts model, in which for a parameter $w\in [0,1]$ a $q$-coloring (not necessarily proper) of the vertices of a finite graph is selected proportionally to $w$ raised to the number of monochromatic edges in the coloring (setting $w=0$ corresponds to the uniform distribution on proper colorings). 
It is a folklore conjecture in statistical physics that for $w>0$ the model exhibits weak spatial mixing if and only if $w\geq 1-\tfrac{q}{d+1}$.
The lower bound on $w$ is known to be tight~\cite{Peruggietalgeneral,Peruggietaldiagrams,gal15}.
For $q=2$ the conjecture has been known to be true for some time~\cite{Georgiibook,SST2-spinantiferro}, and in fact, the model even exhibits strong spatial mixing when $w>1-\tfrac{q}{d+1}$~\cite{SST2-spinantiferro}.
Confirming the conjecture for $q\geq 3$ turned out to be much more difficult, and it was not until a few years ago that this was done for $q=3$~\cite{GalanisetalUniquenessq=3} and $q=4$ and $d\geq 5$~\cite{deBoeretalq=4}.
An asymptotic form of the conjecture was recently shown to be true in the sense that for each $q\geq 3$, there exists $d_0$, such that for all $d\geq d_0$ the conjecture is true~\cite{Bencsetalallq}.
Unfortunately, the exact dependence of $d_0$ on $q$ is not known.
Chen, Liu, Mani, and Moitra~\cite{chen2023strong} recently proved that the model has weak spatial mixing for a range of parameters that approaches the conjectured threshold as both $q,d \to \infty$.

In the present paper, we complement the results from~\cite{Bencsetalallq} and~\cite{chen2023strong} by proving near optimal weak and strong spatial mixing with concrete bounds on the parameter $w$ in terms of $q$ and $d$.
Before we state our main results, we first give a precise definition of weak and strong spatial mixing.

\subsubsection*{Definitions}
Let $q > 0$ be an integer and let $T = (V, E)$ be a finite tree with the root vertex denoted as $v$. 
We introduce the term \textit{partially $q$-colored tree} to describe a triple $(T, \Lambda, \tau)$, where $\Lambda \subset V$ and $\tau: \Lambda \to [q] = \{1, \ldots, q\}$. 
	We often refer to $\tau$ as a \textit{partial coloring} or \emph{boundary condition} of the tree, and any vertex not contained in $\Lambda$ is referred to as a \textit{free} vertex. 
	
	For a given coloring $\psi: V \to [q]$, we define $m(\psi)$ as the number of monochromatic edges, i.e. the number of edges whose endpoints share the same color. 
	The partition function, $Z_T(w)$, of the $q$-state Potts model of $T$ with boundary condition $\tau$ is expressed as
	\begin{align}\label{eq:def pf}
		Z_{T}(w) \coloneqq \sum_{\substack{\psi: V \to [q]  \\ \psi|_\Lambda = \tau } } w^{m(\psi) }.
	\end{align}
 
	For $w \geq 0$, there is an associated probability measure $\mathbb{P}_{T;w}$ on the space of all colorings respecting the boundary conditions, with the probability mass function defined as:
	
	\begin{align*}
		\mu_{T;w}(\psi) = \dfrac{w^{m(\psi)}}{Z_T(w)}.
	\end{align*}
	
	To denote random variables, we use capital letters. 
	In particular, $\mathbb{P}_{T;w}[\Phi(v) = j\mid \tau]$ denotes the probability of assigning color $j$ to vertex $v$ when sampling a coloring from this distribution. 
	In cases where $w$ is evident from the context, we use the shorthand notation $\mathbb{P}_{T}$ instead of $\mathbb{P}_{T;w}$. 
 
	\begin{definition}[Weak Spatial Mixing (WSM)]
 Let $\mathcal{T}$ be a collection of rooted trees.
				The $q$-state Potts model on $\mathcal{T}$ at parameter $w\geq 0$ exhibits \emph{weak spatial mixing} (\emph{WSM}) with exponential decay rate of $r\in (0,1)$, if there exists a constant $C>0$, such that for any finite rooted tree $(T,v)\in \mathcal{T}$, any $\Lambda \subset V(T) \setminus \{v\}$, and any boundary condition $\tau: \Lambda \to [q]$, as well as any color $i \in [q]$, it holds that
		\begin{align*}
		\left|\mathbb{P}_{T;w}[\Phi(v) = i|\tau]-\tfrac{1}{q}\right|\leq Cr^{\dist(v,\Lambda)},
		\end{align*}
  where $\dist(v,\Lambda)$ denotes the graph distance from the root vertex $v$ to the set $\Lambda.$
			\end{definition}

 When considering two distinct boundary conditions, $\tau$ and $\tau'$, defined on the same vertex set $\Lambda$, we can compare the marginal probabilities of the root vertex $v$ receiving color $i$ for both boundary conditions. 
	If the difference in marginal probabilities tends to zero as the distance increases,  we speak of strong spatial mixing.
	
	\begin{definition}[Strong Spatial Mixing (SSM)]
  Let $\mathcal{T}$ be a collection of rooted trees.
				The $q$-state Potts model on $\mathcal{T}$ at parameter $w\geq 0$ exhibits \emph{strong spatial mixing} (\emph{SSM}) with exponential decay rate of $r\in (0,1)$, if there exists a constant $C>0$, such that for any  finite rooted tree $(T,v)\in \mathcal{T}$, any $\Lambda \subset V(T) \setminus \{v\}$, and any two boundary conditions $\tau, \tau': \Lambda \to [q]$ differing on $\Delta_{\tau,\tau'} \coloneqq \{ u \in \Lambda \mid \tau(u) \neq \tau'(u) \}  \subset V(T)$, as well as any color $i \in [q]$, it holds that
		\begin{align*}
			\big| \mathbb{P}_{T;w}[\Phi(v) = i|\tau] -  \mathbb{P}_{T;w}[\Phi(v) = i|\tau']\big| \leq Cr^{\dist(v, \Delta_{\tau, \tau'})}.
		\end{align*}
			\end{definition}

The following lemma implies that SSM is indeed a stronger property than WSM.
\begin{lemma}\label{lem:SSM->WSM}
Let $q>0$ be an integer.
The $q$-state Potts model at parameter $w\geq 0$ exhibits weak spatial mixing with exponential decay rate $r\in (0,1)$ on a family of rooted trees $\mathcal{T}$, if there exists a constant $C>0$ such that for each rooted tree $(T,v)\in \mathcal{T}$, and any two boundary conditions $\tau,\tau'$ on $\Lambda$, where $\Lambda=\{u\in V(T)~|~\dist(u,v)=t\}$ for some $t\ge 1$, and any color $i\in [q]$,
\begin{align*}
			\big| \mathbb{P}_{T;w}[\Phi(v) = i|\tau] -  \mathbb{P}_{T;w}[\Phi(v) = i|\tau']\big| \leq Cr^{t}.
		\end{align*}
\end{lemma}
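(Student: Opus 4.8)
The plan is to derive WSM from the hypothesis by conditioning on the colouring of the entire sphere at distance $t \coloneqq \dist(v,\Lambda)$ around the root, and then combining the spatial Markov property of the Potts model on a tree with the invariance of the model under permutations of the $q$ colours. Set $S_t \coloneqq \{u \in V(T) : \dist(v,u) = t\}$. Since some vertex of $\Lambda$ lies at distance exactly $t$ from $v$ we have $S_t \neq \emptyset$, and since $v \notin \Lambda$ we have $t \ge 1$; the degenerate case $\Lambda = \emptyset$ (where $\dist(v,\Lambda)=\infty$) and the trivial case $q = 1$ are handled separately, and we may assume the relevant partition functions are positive so that all conditional probabilities below are defined.

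First I would record two expansions. For a boundary condition $\tau$ on $\Lambda$ and a colour $i$, conditioning on $\Phi|_{S_t}$ gives
\[
\mathbb{P}_{T,w}[\Phi(v) = i \mid \tau] = \sum_{\sigma : S_t \to [q]} \mathbb{P}_{T,w}[\Phi(v) = i \mid \Phi|_{S_t} = \sigma]\cdot \mathbb{P}_{T,w}[\Phi|_{S_t} = \sigma \mid \tau].
\]
Because $S_t$ separates $v$ from every vertex of $\Lambda$ not lying on $S_t$, the spatial Markov property shows that $b_\sigma \coloneqq \mathbb{P}_{T,w}[\Phi(v) = i \mid \Phi|_{S_t} = \sigma]$ does not depend on $\tau$ (for $\sigma$ inconsistent with $\tau$ on $\Lambda \cap S_t$ the corresponding weight in the sum is $0$, so this causes no difficulty); moreover $b_\sigma$ is exactly $\mathbb{P}_{T,w}[\Phi(v)=i\mid\sigma]$ with $\sigma$ read as a boundary condition on the set $S_t = \{u : \dist(v,u)=t\}$. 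Applying the same conditioning with the free (empty) boundary condition, together with the fact that $\mu_{T,w}$ is invariant under permuting the $q$ colours, so that $\mathbb{P}_{T,w}[\Phi(v)=i] = \tfrac1q$, gives
\[
\tfrac1q = \sum_{\sigma : S_t \to [q]} b_\sigma \cdot \mathbb{P}_{T,w}[\Phi|_{S_t} = \sigma].
\]

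Thus both $\mathbb{P}_{T,w}[\Phi(v)=i\mid\tau]$ and $\tfrac1q$ are convex combinations of numbers of the form $b_\sigma$, the two combinations being supported on possibly different sets of colourings $\sigma$ of $S_t$, but in each case only on $\sigma$ for which $b_\sigma$ is a genuine root marginal for a sphere boundary condition. The hypothesis, applied to $(T,v)\in\mathcal{T}$ with $\Lambda$ replaced by $S_t$ (and this $t \ge 1$), says precisely that any two such values differ by at most $Cr^t$; hence all the $b_\sigma$ occurring lie in a common interval of length at most $Cr^t$, and so any two convex combinations of them differ by at most $Cr^t$. Therefore $\bigl|\mathbb{P}_{T,w}[\Phi(v)=i\mid\tau]-\tfrac1q\bigr| \le Cr^t = Cr^{\dist(v,\Lambda)}$, which is WSM with the same constant $C$ and rate $r$.

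I do not expect a serious obstacle here; the points that need care are the correct application of the Markov/separation property — to see that $b_\sigma$ is independent of $\tau$ and that consistency of $\sigma$ with $\tau$ on $\Lambda \cap S_t$ lets one drop the remaining conditioning — and the observation that $\mathcal{T}$ need not be closed under passing to subtrees, which is why the argument works with the full sphere inside $T$ itself rather than truncating $T$ at radius $t$.
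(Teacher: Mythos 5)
Your proof is correct, and it is the same underlying idea as the paper's: compare the marginal at the root for a boundary condition against the unconditioned marginal $1/q$, and control the difference by using the hypothesis applied to the sphere $S_t$. The paper's own proof is terser --- it fixes $\Lambda$ to be the sphere, introduces extremizing sphere-boundary conditions $\tau_{\min},\tau_{\max}$, and asserts ``by symmetry'' that $\mathbb{P}[\Phi(v)=i\mid\tau_{\min}]\le 1/q\le \mathbb{P}[\Phi(v)=i\mid\tau_{\max}]$, then applies the hypothesis to bound the gap. Your version makes two things explicit that the paper leaves implicit: (1) the justification of ``by symmetry'' is precisely your law-of-total-probability step expressing $1/q$ as a convex combination of the sphere-conditioned values $b_\sigma$; (2) for a general $\Lambda$ (not just the full sphere) one must condition on $S_t$ and invoke the spatial Markov property of the tree, observing that $\mathbb{P}[\Phi(v)=i\mid\tau]$ is likewise a convex combination of the $b_\sigma$. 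The paper tacitly assumes this reduction, so in that respect your write-up is the more complete one; you also correctly note that this is why one must work with the sphere inside $T$ itself rather than passing to a subtree, since $\mathcal{T}$ need not be closed under taking subtrees.
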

\begin{proof}
Let $(T,v)\in \mathcal{T}$ and let $\Lambda=\{u\in V(T)~|~\dist(u,v)=t\}$ for some $t \ge 1$.
Let $\tau_{\text{max}}$ (resp. $\tau_{\text{min}}$) be a boundary condition on $\Lambda$ that maximizes (resp. minimizes) $\prob_{T;w}[\Phi(v)=i\mid \tau]$ for a given color $i$.
Then by symmetry 
\[
\prob_{T;w}[\Phi(v)=i\mid \tau_{\text{min}}]\leq \tfrac{1}{q}\leq \prob_{T;w}[\Phi(v)=i\mid \tau_{\text{max}}].
\]
By assumption, we have 
\[
\prob_{T;w}[\Phi(v)=i\mid \tau_{\text{max}}]-\prob_{T;w}[\Phi(v)=i\mid \tau_{\text{min}}]\leq C r^{\dist(v,\Lambda)}.
\] 
 Therefore, $ \prob_{T;w}[\Phi(v)=i\mid \tau_{\text{max}}]-1/q$ is equal to
\begin{align*}
  \prob_{T;w}[\Phi(v)=i\mid \tau_{\text{max}}]-\prob_{T;w}[\Phi(v)=i\mid \tau_{\text{min}}]+\prob_{T;w}[\Phi(v)=i\mid \tau_{\text{min}}]-1/q
\end{align*}
and hence is bounded from above by $Cr^{\dist(v,\Lambda)}$.
It follows in a similar way that
\[
\prob_{T;w}[\Phi(v)=i\mid \tau_{\text{min}}]-1/q\geq -Cr^{\dist(v,\Lambda)},
\]
implying that the model exhibits WSM with the desired decay rate.
\end{proof}

\subsubsection*{Main results}
We denote by $\mathcal{T}_\Delta$ the collection of all rooted trees of maximum degree at most $\Delta$, where the root has degree at most $\Delta-1$.
Our first main theorem concerns SSM and complements the SSM result of~\cite{chen2023strong} for the case $w=0$.
    \begin{theorem}\label{thm:main ssm}
        There exists a constant $K>0$, such that for any $q\geq 3$, and any $d$ such that $d+1\ge \tfrac{e-1/2}{e-1}q$, the $q$-state Potts model at parameter $w$ on $\mathcal{T}_{d+1}$ exhibits SSM, provided
        \[
        1> w\geq 1-\frac{q}{d+1}\left(1-\frac{K}{d+1}\right).
        \]
    \end{theorem}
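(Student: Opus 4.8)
The plan is to analyze the tree recursion for the Potts model marginals and to establish a contraction estimate in a suitable potential. Recall that for a tree rooted at $v$ with children $v_1,\dots,v_k$ subtended by subtrees $T_1,\dots,T_k$, the root marginals $p_i = \mathbb{P}_{T,w}[\Phi(v)=i\mid\tau]$ are obtained from the children's marginals $p^{(j)}_i$ by the standard recursion $p_i \propto \prod_{j=1}^k \bigl(w p^{(j)}_i + (1 - p^{(j)}_i)\bigr) = \prod_{j=1}^k \bigl(1-(1-w)p^{(j)}_i\bigr)$, with the convention that a leaf in $\Lambda$ colored $c$ contributes the degenerate marginal $e_c$ and a free leaf contributes the uniform vector. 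For SSM one must track, alongside contraction of these marginals, the fact that near a disagreement vertex the one-step map is bounded, so that the total distance is controlled by $r^{\dist(v,\Delta_{\tau,\tau'})}$ times a constant; this is exactly the reduction afforded by iterating a strict contraction. First I would set up the right metric/potential $\Phi$ (a Jacobian-norm or line-integral argument, or the log-ratio coordinates $\log(p_i/p_j)$ used in the two-spin literature), prove that a single application of the recursion, at a vertex of degree at most $d+1$ (so at most $d$ children), contracts $\Phi$-distances by a factor $r<1$, uniformly over admissible children marginals, and then chain these contractions along the path from $v$ to $\Delta_{\tau,\tau'}$, handling the "base case" disagreement vertex by the trivial bound that any two marginal vectors are at bounded $\Phi$-distance.

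The heart of the matter is the one-step contraction estimate, and I expect the hard part to be showing that the contraction factor stays below $1$ — in fact of the form $1 - \Omega(1/d)$ or better — precisely when $w \ge 1 - \tfrac{q}{d+1}\bigl(1-\tfrac{K}{d+1}\bigr)$. The natural approach is to bound the operator norm of the Jacobian of the recursion map. Writing $f_i = 1-(1-w)p_i$ and $F_i = \bigl(\prod_j f_i^{(j)}\bigr)\big/\bigl(\sum_\ell \prod_j f_\ell^{(j)}\bigr)$, differentiation shows each partial derivative $\partial F_i/\partial p_m^{(j)}$ carries a factor $(1-w)$ and a factor that is, up to lower-order terms, a difference of marginals; summing over the at most $d$ children and over colors, the Jacobian norm is roughly $(1-w)\cdot d \cdot (\text{something like } \max_i p_i)$. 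Since $w$ close to $1 - \tfrac{q}{d+1}$ makes $(1-w)d \approx \tfrac{dq}{d+1} \approx q$, and the "worst-case" marginals that one must allow are, by a separate argument, confined to a region where $\max_i p_i$ is close to $1/q$ (this is where the hypothesis $d+1 \ge \tfrac{e-1/2}{e-1}q$ and the companion WSM-type analysis enter — one first shows the iterates are trapped in a small ball around the uniform vector), the product is bounded by something like $1 - \Omega(1/d)$. Making this rigorous requires (i) a trapping/invariant-region lemma showing that after one step all marginals lie in a controlled neighborhood of $\tfrac{1}{q}\mathbf{1}$, of radius $O(1/d)$ or a fixed small constant, and (ii) a careful Jacobian bound valid throughout that region, with the constant $K$ absorbing the slack.

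Concretely, the steps in order: (1) write down the recursion and its Jacobian in log-ratio or probability coordinates; (2) prove an invariant-region lemma — under the hypothesis on $d,q,w$, if all children marginals lie in a ball $B$ of radius $\rho = O(1/d)$ (or a small absolute constant) around uniform, so does the parent marginal, and moreover after $\dist(v,\Lambda) \ge 1$ steps every marginal lies in $B$ regardless of the boundary; (3) on $B$, bound the contraction factor of the recursion in the chosen metric by $r = 1 - K'/(d+1)$ for a suitable $K'$, using that $(1-w)(d) \le q\bigl(1 - \tfrac{K}{d+1}\bigr)\cdot\tfrac{d}{d+1}$ together with $\max_i p_i \le \tfrac1q + O(\rho)$ to beat the threshold; (4) for SSM, fix $\tau,\tau'$ differing on $\Delta_{\tau,\tau'}$, let $P$ be the path from $v$ to the nearest disagreement vertex $u$ at distance $D = \dist(v,\Delta_{\tau,\tau'})$; couple the two recursions so they agree on every subtree not containing a disagreement, bound the distance of the two marginal vectors at $u$'s parent's relevant child by the trivial diameter of the marginal simplex in the $\Phi$-metric, then apply the contraction from step (3) once per edge of $P$ to get a bound $C r^{D}$; (5) conclude, and invoke Lemma~\ref{lem:SSM->WSM} if one also wants the WSM corollary. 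The main obstacle, to repeat, is step (3): squeezing the contraction constant below $1$ with the near-optimal $w$ forces the Jacobian bound and the invariant region in step (2) to be essentially tight, which is presumably where the constant $\tfrac{e-1/2}{e-1}$ and a delicate optimization — rather than a crude triangle-inequality estimate — become necessary.
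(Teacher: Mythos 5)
The high-level skeleton you propose (tree recursion, one-step Jacobian contraction, chain along the path to the nearest disagreement) is the same as the paper's, and you correctly identify that the hard part is squeezing the contraction factor below $1$ near the threshold. However, there is a genuine gap in your step (2), and it breaks the estimate in step (3).

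You propose an invariant-region lemma trapping all marginals in a ball of radius $O(1/d)$ (or a small absolute constant) around the uniform vector after one step. For SSM this is false. The boundary conditions $\tau,\tau'$ may fix arbitrary vertices arbitrarily close to the root, including vertices at distance one, and a vertex with many fixed neighbors colored the same way has a marginal bounded only by roughly $e^2/q$ when $d+1$ is near $\tfrac{e-1/2}{e-1}q$ (this is what the paper's one-step bound in Lemma~\ref{lem:prob basic}(i) and Corollary~\ref{cor: bounds on B(d)}(i) give: $\max_i p_i \le K(a)/q$ with $K(a)$ approaching $e^2$). That is a constant multiple of $1/q$, not $1/q + O(1/d)$. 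Plugging this into your back-of-envelope Jacobian estimate $(1-w)\cdot d\cdot \max_i p_i$ gives roughly $q\cdot(e^2/q)=e^2$, far above $1$, so no contraction follows from your reasoning. You have no mechanism to recover from this.

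The paper escapes by two devices you do not have. First, it does not bound the Jacobian crudely; after changing to square-root-ratio coordinates and factoring the Jacobian (Lemma~\ref{lemma: DF factorization}), a product-over-colors AM--GM step yields an extra factor $e^{-1}$ (the $\prod_j(1-\beta_j)\sum_k\beta_k \le e^{-1}$ bound), producing Theorem~\ref{thm: norm bound} with the $\tfrac{1-w}{e}$ prefactor. Second, the quantity that enters is not $\max_i p_i$ but a ``local weight'' $\lambda_{v_k}$ that is multiplied by the number $f_k$ of \emph{free} children of $v_k$, and there is a tradeoff: vertices with many fixed children have larger $\lambda_{v_k}$ but contribute to fewer terms, while vertices with all-free children have $\lambda_{v_k}^2 \approx e/q$, small enough that $\tfrac{1-w}{e}\cdot d \cdot (e/q) \approx \tfrac{d}{d+1}$. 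Proposition~\ref{prop: bound lambda_v2} makes this precise, and the hypothesis $d+1\geq \tfrac{e-1/2}{e-1}q$ is used specifically to ensure $K(a)\leq e^2$ so that this $f_k$-dependent expression is maximized at $f_k=d$ --- not, as you suggest, to force an invariant region of small radius. Without the $e^{-1}$ gain and the $f_k$-versus-$\lambda_{v_k}$ balancing, the argument as you sketch it does not close.
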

In particular, this result shows that as $d\to \infty$, the bound on $w$ approaches the optimal bound of $1-\tfrac{q}{d+1}$ at a rate that is linear in $1/d$.
\begin{remark}\label{rem:SSM 1}
The constant $K$ in the theorem above in fact depends on $d$ and converges to $\tfrac{5}{2}e^2$ as $d\to \infty$, as one can verify by inspecting our proof given below.
We note that when the ratio between $d+1$ and $q$ is less than $(e-1/2)/(e-1)$, such a constant $K$ still exists.
However, it is unclear whether this constant is uniformly bounded. 
See also Remark~\ref{rem:SSM 2}.
\end{remark}
\begin{remark}
This result can easily be extended to the slightly larger class of rooted trees where we allow the root vertex to have degree $\Delta$. 
This follows because the marginal probability of the root vertex depends continuously on those of its neighbors, and if these are close to each other for two boundary conditions, the same will be true for the marginal probabilities at the root vertex.      
\end{remark}

Our second main result concerns WSM, for which we can get a better range of parameters than in Theorem~\ref{thm:main ssm}.

    \begin{theorem}\label{thm:main wsm}
        For each integer $q\geq 3$ there exists a constant $K_q''=K_q''(d)>0$ such that for any $d\ge q+2$, the $q$-state Potts model at parameter $w$ on $\mathcal{T}_{d+1}$ exhibits WSM provided
        \[
        1> w\geq 1-\frac{q}{d+1}\left(1-\frac{K_q''}{d+1}\right).
        \]
        Moreover, $\limsup_{d\to\infty}K''_q(d)=5e/2$ and  $K''_q$ is uniformly bounded  in $d$ and $q$. 
    \end{theorem}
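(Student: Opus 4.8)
The plan is to analyze the standard tree recursion for the marginal probabilities of the Potts model and show it contracts in an appropriate metric. Recall that for a rooted tree $(T,v)$ with children subtrees $T_1,\dots,T_k$ hanging off the root (here $k\le d$), the ratio $R_j \coloneqq \mathbb{P}_{T,w}[\Phi(v)=j\mid \tau]$, suitably normalized, satisfies a recursion of the form $R_j = F_j(R^{(1)},\dots,R^{(k)})$, where $R^{(i)}$ is the marginal vector at the root of $T_i$ and $F$ is an explicit rational map built from the edge interaction $w$. Concretely, if $p^{(i)}_j$ denotes the marginal of color $j$ at the root of $T_i$, then the unnormalized weight of color $j$ at $v$ is $\prod_{i=1}^k\big(w\,p^{(i)}_j + (1-w)(1-p^{(i)}_j) \cdot \tfrac{?}{?}\big)$ — more precisely $\prod_{i=1}^k\big(1-(1-w)p^{(i)}_j\big)^{-1}$-type terms after the right normalization; I would fix the exact normalization (e.g. the ``$w$-weighted'' ratios used in \cite{Bencsetalallq,chen2023strong}) at the start.

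For WSM it suffices, by Lemma~\ref{lem:SSM->WSM}, to bound the difference of root marginals for two boundary conditions placed on the \emph{entire} sphere $\Lambda=\{u:\dist(u,v)=t\}$. The key advantage over SSM is that one may work with the ``linearized'' recursion around the uniform fixed point $p_j\equiv 1/q$: since every leaf at depth $t$ is in $\Lambda$, there are no free boundary leaves to handle separately, so the relevant quantity to track is the deviation $\delta^{(i)}_j = p^{(i)}_j - 1/q$. I would compute the Jacobian of the normalized recursion $F$ at the uniform fixed point, show that its operator norm (in a carefully chosen norm on the ``deviation space'' $\{\delta:\sum_j\delta_j=0\}$ — likely a weighted $\ell_\infty$ or $\ell_2$ norm, as in the analogous colorings argument) is at most some $\lambda<1$ uniformly over $k\le d$, for $w$ in the stated range, and then upgrade this infinitesimal contraction to a genuine contraction on a neighborhood of the fixed point. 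Iterating down the $t$ levels then gives decay $\lambda^t$ up to the constant $C$ absorbing the size of the initial deviation (which is $O(1)$), and one tracks how close $\lambda$ can be pushed to a constant as $d\to\infty$, extracting the claimed $K'_q\to e/2$ from the second-order term in the expansion of $w=1-\tfrac{q}{d+1}(1-K'_q/(d+1))$.

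The main obstacle, and the reason WSM gives a better range than SSM, is the \emph{global} contraction step: near $w=1-\tfrac qd$ the Jacobian at the uniform fixed point has norm exactly approaching $1$, so the naive linearization is marginal and one must exploit that the true recursion bends favorably (the nonlinear terms push marginals back toward uniform), or alternatively use a potential/message argument with a cleverly chosen nonlinear potential function $\varphi$ so that $\varphi\circ F$ contracts where $F$ itself does not. I expect the heart of the proof to be: (i) identifying the right potential or the right norm; (ii) a delicate estimate showing that, for $d\ge q+2$ and $w$ in the prescribed window, the worst case over the number of children $k$ and over the configuration of the $\delta^{(i)}$ still yields a contraction factor bounded away from $1$; and (iii) the asymptotic bookkeeping that extracts the sharp constant $e/2$ — which presumably comes from an expression like $(1+1/d)^{-d}\to e^{-1}$ or $\big(1-\tfrac1{d+1}\big)^{d+1}$ arising from the product over children, explaining the appearance of $e$. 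Uniform boundedness of $K'_q$ in $q$ follows once the estimate is written with explicit constants independent of $q$ in the regime $d\ge q+2$.
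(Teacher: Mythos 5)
Your outline captures the right top-level architecture (tree recursion, reduce WSM to a contraction estimate via Lemma~\ref{lem:SSM->WSM}, extract the sharp $1/e$-type constant from a product over children), but it stops short of the single idea that makes the paper's proof go through, and the route you describe would run into exactly the obstruction you yourself flag.

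Concretely, the paper does \emph{not} linearize the recursion at the uniform fixed point and then try to propagate an infinitesimal contraction outward. Instead it changes variables to the \emph{square root ratios} $R_{T,v;i}=\sqrt{\tilde R_{T,v;i}}$; this is the ``cleverly chosen nonlinear potential function'' that you anticipate but do not identify. With this coordinate change the recursion $F(x)_i=\sqrt{1+(w-1)x_i^2/S(x)}$ has a Jacobian that factors as a diagonal matrix times a rank-one correction of the identity (Lemma~\ref{lemma: DF factorization}), and one gets a \emph{global} bound, not a fixed-point bound: by the integral (mean-value) representation (Lemma~\ref{lemma: change to integral}), $\|X_v-Y_v\|^2$ is controlled by $\int_0^1\|\partial_t\prod_j F(Z_{v_j}(t))\|^2\,dt$, where $Z_{v_j}(t)$ interpolates between the two boundary conditions' ratio vectors. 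The contraction factor $\tfrac{1-w}{e}$ in Theorem~\ref{thm: norm bound} then comes from an AM--GM estimate $\prod_j(1-\beta_j)\sum_k\beta_k\le(\tfrac d{d+1})^{d+1}\le e^{-1}$, valid along the whole interpolation segment, not only at the uniform point. This is what lets the argument give genuine exponential decay at rate $\tfrac d{d+1}$ from an arbitrary starting configuration, where a local-linearization argument near the fixed point would only yield a rate approaching $1$ with constants that you could not control.

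There is a second missing ingredient: the contraction in Theorem~\ref{thm: norm bound} is weighted by ``local weights'' $\lambda_{v_k}$, and the whole point of the $d\ge q+2$ hypothesis and of the two-step marginal bound (Lemma~\ref{lem:prob basic}(ii)) is to show that $\tfrac{\alpha q}{e}\lambda_{v_k}^2\le 1$. This requires a priori bounds on the root marginals (Lemma~\ref{lemma: Bernoulli opt}, Corollary~\ref{cor: bounds on B(d)}), which are entirely absent from your sketch. The reason WSM admits a better constant than SSM is not that you can linearize at the uniform point; it is that for WSM one may assume (by induction depth) that the root's neighbours and second neighbours are all free, which lets one invoke the sharper bound $B(0)\le K/q$ rather than the one-step bound $B(d)\le K(a)/q$, improving $\lambda_{v_k}^2$. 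Finally, the constant $e/2$ arises from cancelling the leading $e^{-1}$ of Theorem~\ref{thm: norm bound} against the leading $e$ in the bound $\lambda_v^2\le\tfrac eq(1+\tfrac{K/2}{d+1-K})(\cdots)^{-2}$, with the second-order term of $(1-x)^{-1/x}\le e\cdot\tfrac{1-x/2}{1-x}$ (Lemma~\ref{lemma: power -1/x exp bound}) producing the $1/2$; your guess that $e$ enters via $(1+1/d)^{-d}$ is the right intuition for the leading factor, but the $1/2$ requires the precise second-order estimate. In short, the proposal names the shape of the argument, but the change of variables, the global Jacobian estimate along the interpolation, and the marginal-probability bounds behind the local weights are all unproved and each is essential.
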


This result yields very concrete bounds on the parameter $w$ for the model to exhibit WSM, thereby nicely complementing the exact result that holds for large values of $d$ from~\cite{Bencsetalallq}. 
In~\cite{chen2023strong} a similar result was established. However, in that work, the factor in front of $\tfrac{q}{d+1}$ is of the form $1-\tfrac{C}{d+1}-\frac{C'}{q}$, for positive constants $C,C'$ and thus is asymptotically optimal only when \emph{both} $q$ and $d$ tend to infinity.

We refer to~\cite{gu2022power} for the optimal possible rate in Theorem~\ref{thm:main wsm}. 

\subsection{Motivation from computer science}    
As mentioned earlier, proving WSM/SSM is motivated from the perspective of statistical physics as it implies uniqueness of the Gibbs measure~\cite{BW2002}.
It is additionally motivated from a computer science perspective. 
In~\cite{Blancasamplin}, the authors showed that for parameters $q,d,w$, such that the $q$-state Potts model at parameter $w$ exhibits WSM on trees of maximum degree at most $d$, there is an efficient (randomized) algorithm to sample from the Potts distribution on random $d$-regular graphs.
Theorem~\ref{thm:main wsm} thus gives a wide range of parameters for which this efficient sampling is possible.

In~\cite{chen2023strong} the authors used their proof for SSM of the Potts model at $w=0$ (corresponding to proper colorings) to also show the rapid mixing of the Glauber dynamics on large girth graphs. 
This was done by building upon the recently established and powerful spectral independence approach to analyze Glauber dynamics~\cite{anarispectral,chenoptimal,Blancaetalmixing}.
While the proof given in~\cite{chen2023strong} is specifically tailored to the setting of the $w=0$ case, we suspect that our proof for SSM for $w>0$ also can be adapted to demonstrate rapid mixing of the Glauber dynamics for large girth graphs. 
This is because our proof of Theorem~\ref{thm:main ssm} has a similar flavor as the proof approach in~\cite{chen2023strong} (we say more about this in the following subsection).
We choose not to pursue this here so as to keep the paper concise.

\subsection{Approach and organization}
The basic idea is to express the marginal probabilities of the root vertex as a function $F$ of the marginals of its neighbors. 
(In statistical physics this is sometimes referred to as the cavity method.)
The next step is to analyze this function and show that it contracts somehow, provided that $w\geq 1-\tfrac{q}{d+1}$ (when $w<1-\frac{q}{d+1}$ it is not difficult to see that it cannot be contractive).
In~\cite{Jonasson2002,GalanisetalUniquenessq=3,deBoeretalq=4,Bencsetalallq} this is done  by looking at two iterations of $F$ and showing that this contracting somehow. Having established this contracting behavior it then follows with a simple inductive proof that the model exhibits WSM.
Unfortunately, this two-step iteration is not straightforward to analyze.
In~\cite{Bencsetalallq} this is done by considering a suitable change of coordinates coupled with a geometric approach, which allows to show WSM for all $w\geq 1-\tfrac{q}{d+1}$ (provided $d$ is large enough).

Looking at two iterations of $F$ means taking the information of the depth $2$ neighborhood of the root vertex into account. 
In case this contains vertices whose color is fixed by the boundary condition (called \emph{fixed vertices}) this complicates matters significantly, which can not be avoided in case one wants to show SSM (for WSM one can simply assume that there are no fixed vertices at distance at most two from the root).
The approach taken in~\cite{chen2023strong} gets around this in a nice way by conjugating the function $F$ with a suitable chosen function thereby only relying on the first neighborhood (which still can contain vertices that are fixed, but in the setting of $w=0$ this turns out not to be an issue).
They show that the resulting function is contracting in one step with respect to the local modification of the squared $2$-norm, by analyzing the Jacobian of the conjugate of the function $F$, which takes a particularly nice form. 

Our approach is inspired both by~\cite{Bencsetalallq} and~\cite{chen2023strong}. 
Following~\cite{Bencsetalallq}, we do not work directly with the marginal probabilities, but rather with a certain ratio of partition functions that can easily be related to the marginal probabilities. 
Inspired by~\cite{chen2023strong} we conjugate this with a very simple function, the square root, obtaining \emph{square root ratios}.
The resulting function that expresses these square root ratios at the root vertex in terms of those at its neighbors has a convenient form, making it relatively easy to analyze its Jacobian of the function, which takes a similar form as in~\cite{chen2023strong}. 

Even though we work with different coordinates, our approach has strong parallels with the approach in~\cite{chen2023strong}. In particular we borrow some of their tools or variations of these.
A notable difference is that our choice of coordinates allows us to handle fixed vertices at distance one from the root in case $w>0$ and thereby allows us to prove SSM for a large range of parameters for the anti-ferromagnetic Potts model.

It is unclear to us how to approach the remaining regime for SSM, i.e. the regime $d+1\leq \tfrac{e-1/2}{e-1}q$. 
It is likely that a different change of coordinates will provide more insight.

In Section~\ref{sec:preliminaries}, we give a detailed outline of our approach, state the two main ingredients, and use these to provide inductive proofs of our main results.
In Sections~\ref{sec:contraction} and~\ref{sec: local weight}, we prove these two main ingredients. 
We conclude with some remarks and questions in Section~\ref{sec:conclusion}.

\section{Setup and detailed approach} \label{sec:preliminaries}
First, we will present a sufficient (and actually an equivalent) formulation of WSM and SSM using ratios of partition functions. 
Most of this follows from routine manipulations of the partition functions much like in~\cite{GalanisetalUniquenessq=3,deBoeretalq=4}, but we include the details for completeness.
Then, we will introduce the notations and the main propositions required to prove WSM and SSM through an inductive approach.

For a $w \in \mathbb{R}_{>0}$ and a free vertex $v$  of the partially $q$-colored tree $(T,\Lambda,\tau)$, we consider the ratio
\begin{align*}
    \widetilde{R}_{T,v;i}(w) \coloneqq \frac{Z_{T,v}^i (w)}{Z_{T - v}(w)},
\end{align*}
where $Z_{T,v}^i (w)$ denotes the partition function restricted to the colorings (respecting $\tau$) that assign color $i$ to vertex $v$. 
If $v$ is not free, say it is fixed to color $i$, then $\widetilde{R}_{T,v}(w)=e_i\in\mathbb{R}^{q}$, where $e_i$ is the $i-$th standard basis vector.
In case we want to highlight the boundary condition, we write  $\widetilde{R}^\tau_{T,v;i}(w)$ instead of $\widetilde{R}_{T,v;i}(w)$.

Let us make two important observations. 
First of all, we have
	\begin{align}\label{eq: observation1}
		\frac{\widetilde{R}_{T,v;i}(w)}{\sum_{j \in [q]}\widetilde{R}_{T,v;j}(w) } =  \mathbb{P}_{T;w}[\Phi(v) = i].
	\end{align}
On the other hand, for each coordinate of $\tilde R_{T,v}$, we have the following description. 
Let $v_1, \ldots, v_d$ be the children of $v$. 
We define $T_{v_j}$ as the connected component containing $v_j$ of the graph $T-v$. 
If we fix a color $i$, we define $\hat{T}_i$ as the forest obtained from $T- v$ by adding a leaf, $\hat{v}_j$, fixed to color $i$, to each vertex $v_j$. 
We denote by $\hat{T}_{i,\hat{v}_j}$ the connected component containing $\hat{v}_{j}$ in $\hat{T}_i$.
We drop the subscript $i$, to denote the same connected component, but for which $\hat{v}_j$ is a free vertex.
Again, we silently carry over the boundary condition to these trees.

Since $T$ is a tree, we have that $Z_{T,v}^i(w) = \prod_{j=1}^d Z_{\hat{T}_{i,\hat{v}_j}}(w) $. 
It follows that
\begin{align*}
    \Tilde{R}_{T,v;i}(w) = \frac{ \prod_{j=1}^d Z_{\hat{T}_{i,\hat{v}_j}}(w) }{Z_{T - v}(w)} =  \frac{ \prod_{j=1}^d Z^i_{\hat{T}_{\hat{v}_j}, \hat{v}_j}(w) }{ \prod_{j=1}^d Z_{\hat{T}_{\hat{v}_j} - \hat{v}_j}(w)}  = \prod_{j=1}^d  \Tilde{R}_{\hat{T}_{\hat{v}_j},\hat{v}_j;i}(w). 
\end{align*}
Writing out the ratio vectors in the trees $\hat{T}_{\hat{v}_j}$ and noting that $\hat{T}_{\hat{v}_j}-\hat{v_j}=T_{v_j}$, gives us
\begin{align*}
    \Tilde{R}_{\hat{T}_{\hat{v}_j},\hat{v}_j;i}(w) &= \frac{Z_{\hat{T}_{\hat{v}_j},\hat{v}_j}^i(w)}{Z_{\hat{T}_{\hat{v}_j} - \hat{v}_j}(w)} \\
    &= \frac{  \sum_{k \in [q] \setminus{\{i\}}} Z_{{T}_{{v}_j},v_j}^k(w) +  w Z_{{T}_{{v}_j},v_j}^i(w)}{ \sum_{k \in [q]} Z_{{T}_{{v}_j},v_j}^k(w)}\\
    &= 1+(w-1)\frac{Z_{{T}_{{v}_j},v_j}^i(w)}{ \sum_{k \in [q]} Z_{{T}_{{v}_j},v_j}^k(w)}\\
    &= 1+(w-1)\prob_{T_{v_j};w}[\Phi(v_j)=i],
\end{align*}
where $\Phi$ is a random sample from the Potts model on the tree $T_{v_j}$.
Also note that by dividing the numerator and denominator by $Z_{{T}_{{v}_j}-v_j}(w)$ in the penultimate equation, we get
  \begin{equation}\label{eq: observation3}
  \Tilde{R}_{\hat{T}_{\hat{v}_j},\hat{v}_j;i}(w)=1+(w-1)\frac{ \Tilde{R}_{T_{v_j},v_j ; i } (w)}{ \sum_{k \in [q]}  \Tilde{R}_{T_{v_j},v_j ; k }(w)}.
  \end{equation}
Thus
\begin{equation}\label{eq: observation2}
    \Tilde{R}_{T,v;i}(w)=\prod_{j=1}^{d}\left(1-(1-w)\prob_{T_{v_j};w}[\Phi(v_j)=i]\right).
\end{equation}


To prove SSM and WSM we will work with the ratio vectors defined as above. 
In what follows $\|\cdot \|$ always denotes the $2$-norm on $\mathbb{R}^q$: $\|x\|=\sqrt{\sum_{i=1}^q x_i^2}$ for $x\in \mathbb{R}^q$.

\begin{lemma}\label{lemma: ratio to SSM}
Let $d,q\in\mathbb{N}_{\geq 2}.$
If $w>0$, then the following condition implies that the Potts model at $w$ exhibits SSM with decay rate $r\in (0,1)$ on $\mathcal{T}_{d+1}$. 
There exists a constant $C'>0$ such that for any $(T,v)\in \mathcal{T}_{d+1}$ and any $\Lambda \subset V(T) \setminus \{v\}$, and any two boundary conditions $\tau, \tau': \Lambda \to [q]$ differing on $\Delta_{\tau,\tau'} = \{ u \in V(T) \mid \tau(u) \neq \tau'(u) \}  \subset V(T)$, it holds that
		\begin{align*}
			\| \tilde R^{\tau}_{T,v}(w)-\tilde R^{\tau'}_{T,v}(w)\| \leq C'r^{\dist(v, \Delta_{\tau, \tau'})}.
		\end{align*}
\end{lemma}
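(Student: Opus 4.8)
The plan is to reduce the bound on the $2$-norm distance between ratio vectors to the pointwise bound on marginal probabilities required by the definition of SSM. The key observation is equation~\eqref{eq: observation1}, which expresses the marginal probability $\mathbb{P}_T[\Phi(v) = i]$ as a normalized coordinate of $\tilde R_{T,v}$, namely $\tilde R_{T,v;i}/\sum_{j}\tilde R_{T,v;j}$. First I would establish a priori bounds on the entries of $\tilde R_{T,v}$: since $w \in (0,1)$, each coordinate satisfies $w^{\deg} \le \tilde R_{T,v;i} \le 1$ (using the product formula~\eqref{eq: observation2}, where each factor lies in $[w,1]$, and the root has at most $d$ children, so in particular $w^d \le \tilde R_{T,v;i} \le 1$). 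Consequently the normalizing sum $S^\tau := \sum_j \tilde R^\tau_{T,v;j}$ is bounded between $qw^d$ and $q$, uniformly over boundary conditions. This is where the hypothesis $w > 0$ is used in an essential way.

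Next I would compare the two normalized vectors. Writing $p^\tau_i = \tilde R^\tau_{T,v;i}/S^\tau$ and similarly for $\tau'$, a standard quotient manipulation gives
\begin{align*}
p^\tau_i - p^{\tau'}_i = \frac{\tilde R^\tau_{T,v;i} - \tilde R^{\tau'}_{T,v;i}}{S^\tau} + \tilde R^{\tau'}_{T,v;i}\left(\frac{1}{S^\tau} - \frac{1}{S^{\tau'}}\right) = \frac{\tilde R^\tau_{T,v;i} - \tilde R^{\tau'}_{T,v;i}}{S^\tau} - \frac{\tilde R^{\tau'}_{T,v;i}(S^\tau - S^{\tau'})}{S^\tau S^{\tau'}}.
\end{align*}
Both terms are controlled by $\|\tilde R^\tau_{T,v} - \tilde R^{\tau'}_{T,v}\|$: the first numerator is at most that norm, and $|S^\tau - S^{\tau'}| = |\sum_j (\tilde R^\tau_{T,v;j} - \tilde R^{\tau'}_{T,v;j})| \le \sqrt{q}\,\|\tilde R^\tau_{T,v} - \tilde R^{\tau'}_{T,v}\|$ by Cauchy--Schwarz. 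Using the lower bound $S^\tau, S^{\tau'} \ge qw^d$ and $\tilde R^{\tau'}_{T,v;i} \le 1$, one obtains
\begin{align*}
\big|\mathbb{P}_T[\Phi(v)=i\mid\tau] - \mathbb{P}_T[\Phi(v)=i\mid\tau']\big| \le \left(\frac{1}{qw^d} + \frac{\sqrt{q}}{q^2 w^{2d}}\right)\|\tilde R^\tau_{T,v} - \tilde R^{\tau'}_{T,v}\| \le C'' \, r^{\dist(v,\Delta_{\tau,\tau'})},
\end{align*}
where $C'' = C'\big(\tfrac{1}{qw^d} + \tfrac{1}{q^{3/2}w^{2d}}\big)$, which is a finite constant depending only on $q, d, w$ and the constant $C'$ from the hypothesis. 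This is exactly the SSM estimate with decay rate $r$ and constant $C''$.

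The only genuinely delicate point is ensuring the constant $C''$ does not depend on the tree $T$ or the boundary conditions, and this is secured precisely because the a priori bounds $w^d \le \tilde R_{T,v;i} \le 1$ are uniform: they follow from the product formula~\eqref{eq: observation2} with at most $d$ factors, each in $[w,1]$, regardless of how deep $T$ is or which vertices are fixed. (A minor wrinkle is that the root may be free or fixed; if $v$ is fixed the ratio vectors are standard basis vectors and agree whenever $v \notin \Delta_{\tau,\tau'}$, in which case the left side is zero, so that case is trivial — we may assume $v$ is free, whence the degree bound on the root gives at most $d$ children.) Apart from this bookkeeping, the argument is just the quotient-rule estimate above together with Cauchy--Schwarz, so I do not anticipate a substantive obstacle; the real content of the paper lies in verifying the hypothesis of this lemma, i.e. the contraction estimate on $\|\tilde R^\tau_{T,v} - \tilde R^{\tau'}_{T,v}\|$, which is deferred to later sections.
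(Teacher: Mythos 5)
Your proposal is correct and follows essentially the same route as the paper: both establish the uniform a priori bounds $w^d \le \tilde R^\tau_{T,v;i} \le 1$ from the product formula \eqref{eq: observation2} to place all ratio vectors in a compact subset of the open positive quadrant, and then exploit that the normalization map $x \mapsto x/\|x\|_1$ is Lipschitz on that set. The paper gets the Lipschitz constant abstractly (differentiable function on a compact set), while you compute one explicitly via the quotient-rule decomposition and Cauchy--Schwarz; this is a cosmetic rather than structural difference.
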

\begin{proof}
    First, let us observe that any ratio vector is contained in a compact subset $K$ of the positive quadrant in $\R^q$. 
    We see from~\eqref{eq: observation2} that for any $i\in[q]$, we have 
    \[
        1\ge \tilde R^{\tau}_{T,v;i}(w)\ge (1-(1-w))^d= w^d>0.
    \]
    
    The function $G:v\mapsto v/\|v\|_1$ is a differentiable function on the positive quadrant, thus it is $L$-Lipschitz between $(K,\|.\|_2)\to (\mathbb{R}^{q},\|.\|_\infty)$.
    Moreover, by~\eqref{eq: observation1}, for any $i\in[q]$, we have 
    \[
    G(\tilde R^{\tau}_{T,v}(w))_i=\prob_{T;w}[\phi(v)=i~|~\tau].
    \] 
    Thus, if 
    \[
        \| \tilde R^{\tau}_{T,v}(w)-\tilde R^{\tau'}_{T,v}(w)\|_2 \leq C'r^{\dist(v, \Delta_{\tau, \tau'})},
    \]
    then 
    \[
        \max_{i\in[q]} \big| \mathbb{P}_{T;w}[\Phi(v) = i|\tau] -  \mathbb{P}_{T;w}[\Phi(v) = i|\tau']\big|\le L\|\tilde R^{\tau}_{T,v}-\tilde R^{\tau'}_{T,v}\| \leq LC'r^{\dist(v, \Delta_{\tau, \tau'})},
    \]
    i.e. the condition in the claim implies SSM with decay rate $r$ and constant $C=L\cdot C'$
\end{proof}
\begin{remark}
We note that the above result crucially relies on the fact that $w>0$. 
For the case $w=0$ one would needs to make assumptions about the relation between $q$ and $d$ (e.g. $q\geq d+1$).
\end{remark}
Continuing along the same line of proof, we present the following alternative approach for WSM.
\begin{lemma}\label{lemma: ratio to WSM}
Let $d,q\in\mathbb{N}_{\geq 2}.$
If $w>0$, then the following condition implies that the Potts model at $w$ exhibits WSM with decay rate $r\in (0,1)$ on $\mathcal{T}_{d+1}$. 
There exists a constant $C'>0$ such that for any $(T,v)\in \mathcal{T}_{d+1}$, any $t \ge 1$, and any two boundary conditions $\tau, \tau': \Lambda \to [q]$, where $\Lambda =\{u\in V(T)~|~\dist(u,v)=t\}$,  it holds that
		\begin{equation*}
			\| \tilde R^{\tau}_{T,v}(w)-\tilde R^{\tau'}_{T,v}(w)\| \leq C'r^{t}.
		\end{equation*}
  \hfill\qed
\end{lemma}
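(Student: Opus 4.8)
The plan is to mirror the proof of Lemma~\ref{lemma: ratio to SSM} almost verbatim, replacing the disagreement set $\Delta_{\tau,\tau'}$ by the sphere $\Lambda = \{u \in V(T) : \dist(u,v) = t\}$ and then appealing to Lemma~\ref{lem:SSM->WSM} to pass from a two-sided ratio bound to WSM. First I would record that, just as in the previous lemma, Equation~\eqref{eq: observation2} forces every ratio vector to lie in the fixed compact set $K = [w^d,1]^q \subset \R_{>0}^q$, independently of the tree and the boundary condition; this is where $w>0$ is used. Then I would invoke that the normalization map $G : x \mapsto x/\|x\|_1$ is continuously differentiable on the open positive quadrant, hence Lipschitz with some constant $L = L(q,w,d)$ as a map $(K,\|\cdot\|_2) \to (\R^q,\|\cdot\|_\infty)$, and that by Equation~\eqref{eq: observation1} we have $G(\tilde R^{\tau}_{T,v})_i = \prob_{T,w}[\Phi(v)=i \mid \tau]$.

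With these two facts in hand the argument is immediate: assuming the hypothesized bound $\|\tilde R^{\tau}_{T,v} - \tilde R^{\tau'}_{T,v}\| \leq C' r^{t}$ for all boundary conditions $\tau,\tau'$ on the sphere $\Lambda$ at radius $t$, Lipschitzness of $G$ gives
\[
\max_{i \in [q]} \big| \prob_{T,w}[\Phi(v)=i \mid \tau] - \prob_{T,w}[\Phi(v)=i \mid \tau'] \big| \leq L\,\|\tilde R^{\tau}_{T,v} - \tilde R^{\tau'}_{T,v}\| \leq L C' r^{t},
\]
and since $\dist(v,\Lambda) = t$ by construction, this is exactly the hypothesis of Lemma~\ref{lem:SSM->WSM} with constant $LC'$. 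That lemma then yields WSM on $\mathcal{T}_{d+1}$ with the same decay rate $r$ and constant $C = LC'$, completing the proof. I would also note in passing that $t \geq 1$ ensures $v \notin \Lambda$, so $v$ is a free vertex and the ratio vector $\tilde R^\tau_{T,v}$ is genuinely defined rather than a standard basis vector.

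There is essentially no obstacle here — the statement is a near-immediate corollary of the machinery already assembled, and the only mild point to be careful about is that the Lipschitz constant $L$ of $G$ must be uniform over the relevant family, which it is precisely because all ratio vectors live in the single compact set $K$ that does not depend on $(T,v)$ or the boundary data. The proof is therefore short; indeed the excerpt already ends the statement with \texttt{\textbackslash qed}, signalling that the authors regard it as following by the same reasoning as Lemma~\ref{lemma: ratio to SSM} combined with Lemma~\ref{lem:SSM->WSM}, so the write-up can simply point to those two results.
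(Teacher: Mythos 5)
Your proposal is correct and matches the route the paper intends: repeat the compactness/Lipschitz argument from Lemma~\ref{lemma: ratio to SSM} to convert the ratio-vector bound into a two-sided bound on marginal probabilities at the sphere of radius $t$, then hand that bound to Lemma~\ref{lem:SSM->WSM} to conclude WSM. The paper marks the lemma with \qed and the phrase ``continuing along the same line of proof'' precisely because this is the combination they have in mind, and you have supplied exactly the missing details (uniformity of the compact set $K$, the Lipschitz constant $L$, and the observation that $t\geq 1$ keeps $v$ free).
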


\subsection{A change of coordinates and a recursion}\label{sec: sq ratio}
To check the conditions of Lemma~\ref{lemma: ratio to SSM} and Lemma~\ref{lemma: ratio to WSM}, we will use different coordinates. 
For a tree $T=(V,E)$ with boundary condition $\tau$ and $w>0$, define the \emph{square root ratio} at a vertex $v\in V$ as
\[
    R_{T,v;i}(w)  \coloneqq \sqrt{\tilde R_{T,v;i}(w)},
\]
where $i\in[q]$.

Since the set of ratio vectors is contained in a compact subset of the open positive quadrant of $\mathbb{R}^q$, therefore, in a similar way as in Lemma~\ref{lemma: ratio to SSM} (resp. Lemma~\ref{lemma: ratio to WSM}), we obtain the following sufficient condition for SSM (resp. WSM).

\begin{lemma}\label{lemma: ratio to SSM and WSM}
Let $d,q\in \mathbb{N}_{\geq 2}$ and let $w>0$.
If there exists a constant $C>0$ and $r \in(0,1)$ such that for any rooted tree $(T,v)\in \mathcal{T}_{d+1}$, and any boundary conditions $\tau,\tau':\Lambda\to [q]$,  where $\Lambda\subseteq V(T)-\{v\}$ (resp. $\Lambda=\{u\in V(T)~|~\dist(u,v)=t\}$ for some $t\ge 1$), we have 
\begin{equation}\label{eq: sq_goal}
    \|R_{T,v}^\tau(w)-R_{T,v}^{\tau'}(w)\|^2\le Cr^{\dist(v,\Delta_{\tau,\tau'})},
\end{equation}
then the Potts model at $w$ exhibits SSM (resp. WSM) on $\mathcal{T}_{d+1}$ with decay rate $r$.
\end{lemma}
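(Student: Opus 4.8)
The plan is to deduce this from the already-established Lemma~\ref{lemma: ratio to SSM} and its WSM counterpart Lemma~\ref{lemma: ratio to WSM} (equivalently from Lemma~\ref{lem:SSM->WSM}), by relating the square root ratios back to the marginal probabilities of the root through a single fixed Lipschitz map. The whole point, exactly as in the proof of Lemma~\ref{lemma: ratio to SSM}, is that the relevant vectors all live in a compact subset of the open positive quadrant of $\R^q$, on which taking square roots (and normalising) is Lipschitz; so an estimate on $\|R_{T,v}^\tau-R_{T,v}^{\tau'}\|^2$ transfers, up to a multiplicative constant and a square root, to an estimate on the marginals of the root.

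First I would record the compactness input: by Equation~\eqref{eq: observation2}, exactly as in the proof of Lemma~\ref{lemma: ratio to SSM}, every coordinate of $\tilde R_{T,v}^\tau$ lies in $[w^d,1]$, so every square root ratio vector lies in the compact box $K:=[w^{d/2},1]^q$, which is contained in the open positive quadrant precisely because $w>0$. Next, let $G\colon x\mapsto x/\|x\|_1$ be the normalisation map from the proof of Lemma~\ref{lemma: ratio to SSM} and set $H(x):=G(x^2)$, i.e. $H(x)_i=x_i^2/\sum_{j}x_j^2$; this map is $C^1$ on a neighbourhood of the compact convex set $K$, hence $L$-Lipschitz on $K$ for some constant $L=L(q,d,w)$, and by Equation~\eqref{eq: observation1} we have $H(R_{T,v}^\tau)_i=G(\tilde R_{T,v}^\tau)_i=\prob_{T,w}[\Phi(v)=i\mid\tau]$ for all $i\in[q]$. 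Consequently, if~\eqref{eq: sq_goal} holds then for every color $i$
\[
\big|\prob_{T,w}[\Phi(v)=i\mid\tau]-\prob_{T,w}[\Phi(v)=i\mid\tau']\big|\;\le\;L\,\|R_{T,v}^\tau-R_{T,v}^{\tau'}\|\;\le\;L\sqrt{C}\,(\sqrt{r})^{\dist(v,\Delta_{\tau,\tau'})}.
\]
In the SSM case this is the required SSM estimate with constant $L\sqrt{C}$ and decay rate $\sqrt r\in(0,1)$; in the WSM case $\Lambda$ is a sphere of radius $t$, so $\dist(v,\Delta_{\tau,\tau'})=t$ whenever $\tau\neq\tau'$, and feeding the displayed bound into Lemma~\ref{lem:SSM->WSM} yields WSM.

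I do not expect a real obstacle here: this is a routine compactness argument, morally identical to the proof of Lemma~\ref{lemma: ratio to SSM}. The two points that need a little care are that $w>0$ is genuinely used (to keep $K$ bounded away from the origin, which is what makes the square root map, and hence $H$, Lipschitz there), and the bookkeeping of the decay rate: since~\eqref{eq: sq_goal} controls $\|R_{T,v}^\tau-R_{T,v}^{\tau'}\|^2$ rather than $\|R_{T,v}^\tau-R_{T,v}^{\tau'}\|$, one picks up a square root and obtains exponential decay at rate $\sqrt r$, which is all that is needed for Theorems~\ref{thm:main ssm} and~\ref{thm:main wsm}. (If one insists on decay rate exactly $r$, it suffices to impose~\eqref{eq: sq_goal} with the exponent $2\dist(v,\Delta_{\tau,\tau'})$ in place of $\dist(v,\Delta_{\tau,\tau'})$.)
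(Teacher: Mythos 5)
Your argument is correct and is essentially the route the paper gestures at in its one-line proof: by Equation~\eqref{eq: observation2} the square-root ratio vectors lie in a compact set $[w^{d/2},1]^q$ bounded away from the origin (this is exactly where $w>0$ enters), the map $x\mapsto x^2/\|x^2\|_1$ sending these to the root marginals is Lipschitz on that set, and the hypothesis then transfers to the marginals, with the WSM case reducing to Lemma~\ref{lem:SSM->WSM}. You are also right to flag that because the left-hand side of~\eqref{eq: sq_goal} is squared, the decay rate you actually obtain for the marginals is $\sqrt{r}$ rather than the $r$ claimed in the lemma's statement; this is a harmless imprecision since any rate in $(0,1)$ suffices for Theorems~\ref{thm:main ssm} and~\ref{thm:main wsm}.
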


To prove our main theorems, it will be our aim to establish Equation~\ref{eq: sq_goal}. 
Throughout, we will fix positive integers $q$ and $d$.
Similarly to the ratio vectors of a tree $T$ at a vertex $v$, the square root ratio vector can also be recovered from the square root ratio vectors of the children of $v$.  By~\eqref{eq: observation3}, we can express $R_{T,v;i}(w)$ as
\begin{align}\label{eq: recursion}
    R_{T,v;i}(w) &= \prod_{j=1}^d R_{\hat{T}_{\hat{v}_j},\hat{v}_j;i}(w)= \prod_{j=1}^d F_w\left(R_{T_{v_j}, v_j ;1 }(w), \ldots, R_{T_{v_j},v_j ;q}(w)\right)_i \ ,
\end{align}
where  $F_w:\mathbb{R}^q\to \mathbb{R}^q$ is defined by
\begin{align}\label{eq: def F}
    F_w(x)_i =  \sqrt{1 + (w-1)\frac{ x_i^2}{ \sum\limits_{k \in [q]}  x_k^2}}.
\end{align}
We introduce some notation to state this more compactly. We denote
\[
S(x) = \sum_{k=1}^{q} x_k^2 \qquad \textrm{and}\qquad S_i(x) = S(x) + (w-1)x_i^2.
\]
Thus, for any $i\in[q]$, we have
\[
F_w(x)_i= \sqrt{ \frac{S_i(x)}{S(x)}}.
\]

So we see that the quantities appearing in Equation~\eqref{eq: sq_goal} are related to the square root ratios of the children of $v$ through the function $F_w$ by Equation~\eqref{eq: recursion}. 
Thus, our main goal is to establish a contraction property of $F_w(x)$. 


Let $(T,v)\in \mathcal{T}_{d+1}$ be a tree with two boundary conditions, $\tau$ and $\tau'$, and assume that $v$ is a free vertex. 
Denote by $X_v(w)$  (resp. $X'_v(w)$) the square root ratio vector of $v$ in $T$ with the boundary condition $\tau$ (resp. $\tau'$). 
Let for $t\in [0,1]$ \[Y_v(w;t) \coloneqq tX_v(w)+(1-t)X'_v(w),\] and define the \emph{local weight} $\lambda_v(w)$ at vertex $v$ by 
\begin{equation}\label{eq:def weight}
\lambda_v(w)=\max_{i\in[q],t\in[0,1]}\dfrac{\sqrt{S(Y_v(w;t))}}{S_i\left(Y_v(w;t)  \right)}.
\end{equation}
It is not hard to see that $\lambda_v(w)^2\geq 1/q.$
The next theorem, which will be proved in Section~\ref{sec:contraction}, gives us that $F$ indeed is contracting under certain assumptions.

\begin{restatable}{theorem}{normbound}  \label{thm: norm bound}
Let $q,d\in \mathbb{N}_{\geq 2}$ and $w>0$.
Let $(T,v)\in \mathcal{T}_{d+1}$ be a rooted tree and $\Lambda\subset V(T)\setminus \{v\}$.
Consider two boundary conditions on $\Lambda$, denoted as $\tau$ and $\tau'$ such that $\dist(v,\Delta_{\tau,\tau'})\ge 2$.
Let $X_v(w)$ and $X'_v(w)$ be the square root ratio vector at $v$ for $\tau$ and $\tau'$ respectively. 
Define $\mathcal{F}$ as the set of neighbors of $v$ that are not fixed by $\tau$ and $\tau'$. 
We have the following inequality 
\[
\|X_v(w)-X'_v(w)\|^2\le \frac{1-w}{e}\sum_{k\in\mathcal{F}}\|\lambda_{v_k}(w)(X_{v_k}(w)-X'_{v_k}(w))\|^2,
\]
where $X_{v_k}(w), X'_{v_k}(w)$ represent the square root ratios of the child $v_k$ in the subtree of $T-v$ containing $v_k$ with boundary condition $\tau,\tau'$, respectively, and $\lambda_{v_k}(w)$ is the local weight at vertex ${v_k}$.
\end{restatable}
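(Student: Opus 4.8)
The plan is to pass from the boundary condition $\tau'$ to $\tau$ by linearly interpolating the square root ratios of the children and integrating the derivative of the resulting path. For $v_k\in\mathcal{F}$ put $Z_{v_k}(t)=tX_{v_k}+(1-t)Y_{v_k}$ for $t\in[0,1]$, and for each colour $i$ set
\[
\Psi(t)_i:=c_i\prod_{v_k\in\mathcal{F}}F\big(Z_{v_k}(t)\big)_i,\qquad c_i:=\prod_{v_j\notin\mathcal{F}}F\big(R_{T_{v_j},v_j}\big)_i\in(0,1],
\]
where the product defining $c_i$ runs over the neighbours of $v$ that are fixed to the same colour by $\tau$ and $\tau'$, so $c_i$ is boundary-condition independent and lies in $(0,1]$ because $0<F(\cdot)_i\le1$ when $w\in(0,1)$. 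By the recursion~\eqref{eq: recursion} we have $X_v=\Psi(1)$ and $Y_v=\Psi(0)$, hence $X_{v;i}-Y_{v;i}=\int_0^1\Psi'(t)_i\,dt$, and Cauchy--Schwarz in $t$ gives $\|X_v-Y_v\|^2\le\int_0^1\|\Psi'(t)\|^2\,dt$. It therefore suffices to bound $\|\Psi'(t)\|^2$ by $\tfrac{1-w}{e}\sum_{v_k\in\mathcal{F}}\lambda_{v_k}^2\|X_{v_k}-Y_{v_k}\|^2$ uniformly in $t$.

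Since every $\Psi(t)_i$ is positive, $\Psi'(t)_i/\Psi(t)_i=\sum_{v_k\in\mathcal{F}}\big\langle\nabla\!\log F(\cdot)_i\big(Z_{v_k}(t)\big),X_{v_k}-Y_{v_k}\big\rangle$, so the first step is the (routine) computation of the gradient of $\log F(x)_i=\tfrac12\log\!\big(S_i(x)/S(x)\big)$, which takes the clean form $\partial_\ell\log F(x)_i=\tfrac{(w-1)x_i}{S_i(x)}\big(\delta_{i\ell}-x_ix_\ell/S(x)\big)$. Writing $p_{k,i}:=(Z_{v_k}(t))_i^2/S(Z_{v_k}(t))\in[0,1]$ (so $\sum_i p_{k,i}=1$ and $F(Z_{v_k}(t))_i^2=1-(1-w)p_{k,i}$) and letting $\pi_k$ be orthogonal projection onto $Z_{v_k}(t)^{\perp}$, this gives
\[
\big\langle\nabla\!\log F(\cdot)_i\big(Z_{v_k}(t)\big),X_{v_k}-Y_{v_k}\big\rangle=(w-1)\,\sqrt{p_{k,i}}\;\frac{\sqrt{S(Z_{v_k}(t))}}{S_i(Z_{v_k}(t))}\,\big(\pi_k(X_{v_k}-Y_{v_k})\big)_i .
\]
Bounding $\sqrt{S(Z_{v_k}(t))}/S_i(Z_{v_k}(t))$ by $\lambda_{v_k}$ (the definition of the local weight), using $\Psi(t)_i^2=c_i^2\prod_k(1-(1-w)p_{k,i})\le\prod_k(1-(1-w)p_{k,i})$, and applying Cauchy--Schwarz to the sum over $k$ after splitting off $\sqrt{p_{k,i}}$, we obtain, for each $i$,
\[
\Psi'(t)_i^2\le(1-w)^2\Big(\prod_{v_k\in\mathcal{F}}(1-(1-w)p_{k,i})\Big)\Big(\sum_{v_k\in\mathcal{F}}p_{k,i}\Big)\Big(\sum_{v_k\in\mathcal{F}}\lambda_{v_k}^2\big(\pi_k(X_{v_k}-Y_{v_k})\big)_i^2\Big).
\]

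The decisive step is then the elementary estimate: with $A:=\sum_{v_k\in\mathcal{F}}p_{k,i}\ge0$, the inequality $1-x\le e^{-x}$ yields $\prod_{v_k\in\mathcal{F}}(1-(1-w)p_{k,i})\le e^{-(1-w)A}$, whence
\[
\Big(\prod_{v_k\in\mathcal{F}}(1-(1-w)p_{k,i})\Big)\cdot A\le A\,e^{-(1-w)A}\le\frac{1}{(1-w)e},
\]
using $\sup_{s\ge0}se^{-s}=e^{-1}$. Substituting this, summing over $i$, and using $\sum_i\big(\pi_k(X_{v_k}-Y_{v_k})\big)_i^2=\|\pi_k(X_{v_k}-Y_{v_k})\|^2\le\|X_{v_k}-Y_{v_k}\|^2$, we get $\|\Psi'(t)\|^2\le\tfrac{1-w}{e}\sum_{v_k\in\mathcal{F}}\lambda_{v_k}^2\|X_{v_k}-Y_{v_k}\|^2$ uniformly in $t$, and integrating finishes the proof. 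I expect the main obstacle to be exactly this step: one has to apply Cauchy--Schwarz and the bound $se^{-s}\le e^{-1}$ in precisely the right order, so that the crude inequality $(\sum_k a_k)^2\le|\mathcal{F}|\sum_k a_k^2$ — which would destroy the constant $\tfrac{1-w}{e}$ — is never used and no leftover power of $1-w$ survives; the gradient computation and the passage $\|X_v-Y_v\|^2\le\int_0^1\|\Psi'(t)\|^2\,dt$ are standard.
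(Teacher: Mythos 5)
Your proof is correct and is essentially the same as the paper's: both reduce to the integral/Jensen bound, compute the derivative of $F$, identify the $\sqrt{S}/S_i$ factor that defines $\lambda_{v_k}$, apply Cauchy--Schwarz over the children with the weight $\sqrt{p_{k,i}}$ split off, and extract the constant $\tfrac{1}{e}$ from the product-times-sum optimisation. Your only (cosmetic) departures are that you work with $\nabla\log F$ componentwise and plain Cauchy--Schwarz instead of the Jacobian factorisation together with Lemma~\ref{B1Chen}, absorb the fixed neighbours into a constant $c_i\le 1$ rather than peeling them off first, and obtain the $e^{-1}$ via $1-x\le e^{-x}$ and $\sup_{s\ge 0}se^{-s}=e^{-1}$ instead of AM--GM.
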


This theorem forms the spine of the inductive proof of our main results.
To be able to use it, we need a bound on the local weights.
For $a>1$ define
\begin{equation}\label{eq:def K(a)}
    K(a):=e\cdot \frac{a-1/2}{a-1}.
\end{equation}
In Section~\ref{sec: local weight} we will prove the following bound on these weights.
\begin{restatable}{proposition}{boundlambdav}\label{prop: bound lambda_v2}
Let $(T,v)\in \mathcal{T}_{d+1}$ be a rooted tree, and let $\Lambda\subset V(T)\setminus \{v\}$.
Consider two boundary conditions on $\Lambda$, denoted as $\tau$ and $\tau'$.
Let $f=|\freen|$ denote the number of free neighbors of $v$.
Assume $d\geq q+2$ and $q\geq 3$, and write $d+1=aq$ for $a>1$. 
Then for any $w\in [1-\tfrac{q}{d+1},1]$,
\begin{enumerate}
    \item[(i)] 
    \[
        \lambda_v(w)^2\le \frac{1}{q} \left(1-\frac{K(a)}{d+1}\right)^{-2}K(a)^{\tfrac{d-f}{d}}\cdot \left(e\cdot \frac{d+1-K(a)/2}{d+1-K(a)}\right)^{\tfrac{f}{d}}.
    \]
\item[(ii)] If all the neighbors of $v$ are free as well as all their respective neighbors, then
\[
 \lambda_v(w)^2 \le \frac{e}{q}\left(1+\frac{K'/2}{d+1-K'}\right)\left(1-\frac{K'}{d+1}\right)^{-2},
\]
where $K'\le \min\{q+2,13\}$ and $K'=e+O(1/d)$. 
\end{enumerate}
\end{restatable}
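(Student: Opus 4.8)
\textbf{Proof plan for Proposition~\ref{prop: bound lambda_v2}.}

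The plan is to bound $\lambda_v^2 = \max_{i,t} S(Z_v(t))/S_i(Z_v(t))^2$ by analyzing, for a fixed point $z = Z_v(t)$ in the positive quadrant, the quantity $S(z)/S_i(z)^2$ coordinate by coordinate. Write $z = (z_1,\dots,z_q)$ and recall $S(z) = \sum_k z_k^2$, $S_i(z) = S(z) + (w-1)z_i^2 = S(z) - (1-w)z_i^2$. The first step is to get a handle on the range of each $z_k^2$. By Equation~\eqref{eq: recursion} each coordinate of $X_v, Y_v$ is a product of $d$ factors of the form $F(\cdot)_k$, and by the definition of $F$ (Equation~\eqref{eq: def F}) each such factor lies in $[\sqrt{w},1]$; moreover a factor coming from a \emph{fixed} child equals either $1$ (if the child is not fixed to color $k$) or $\sqrt{w}$ (if it is). Since $v$ has $f$ free neighbors and $d-f$ fixed ones, this gives $w^{(d-f)/2}\cdot(\text{free contribution}) \le X_{v;k}, Y_{v;k}$, and hence a lower bound on $z_k = tX_{v;k}+(1-t)Y_{v;k}$, while the upper bound $z_k \le 1$ is immediate. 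Writing $d+1 = aq$ and $1-w \le \tfrac{q}{d+1} = \tfrac1a$, one substitutes these bounds; the factor $w^{d-f}$ contributes the term $K(a)^{(d-f)/d}$ (after noting $w^d \ge (1-\tfrac1a)^d \approx e^{-1/a}$ type estimates combine with $1-w$ to yield the $K(a)$ shape), and the $(1-K(a)/(d+1))^{-2}$ prefactor comes from bounding the denominator $S_i(z)^2$ from below using $S_i(z) = S(z)(1-(1-w)z_i^2/S(z)) \ge S(z)(1 - \tfrac{1}{d+1}\cdot\frac{K(a)}{\,\cdot\,})$.

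For part (i), the key inequality is a convexity/AM–GM type estimate: one fixes the ``shape'' of $z$ and optimizes $S(z)/S_i(z)^2$ over the constraint that each $z_k^2$ lies in $[\ell_k, 1]$ where $\ell_k$ encodes the free/fixed split. The worst case pushes the $f$ free-child factors and the $d-f$ fixed-child factors to their extreme values, and the geometric-mean split of the exponent across the $d$ children yields the product $K(a)^{(d-f)/d}\cdot(e\cdot\frac{d+1-K(a)/2}{d+1-K(a)})^{f/d}$ — the second base being exactly $K(a')$-like for the relevant shifted parameter. I would first prove the purely analytic lemma: for $z$ in the positive quadrant with $z_i^2/S(z) =: s_i \in [0, \tfrac{1}{d+1}(1+o(1))]$ one has $S(z)/S_i(z)^2 \le \tfrac{1}{q}(1-(1-w)s_i q)^{-2}\cdot(\text{correction})$, and then feed in the a priori bounds on $s_i$ coming from the product structure. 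A subtlety is that $S_i$ in the denominator is squared, so small multiplicative errors in $s_i$ matter; tracking these carefully is where the $(1-\tfrac{K(a)}{d+1})^{-2}$ and the $(1-\tfrac{\min\{9,q+2\}}{d+1})^{-2}$ corrections arise.

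For part (ii), the hypothesis that $v$, all its neighbors, \emph{and} all their neighbors are free lets us iterate Equation~\eqref{eq: recursion} one more level: each coordinate $X_{v;k}$ is a product over the $d$ children $v_j$ of $F(X_{v_j})_k$, and each $X_{v_j}$ is in turn a product over $v_j$'s $d$ children of $F(\cdot)_k$ — all factors now genuinely in $(\sqrt w,1)$ with no fixed-vertex degeneracy. This two-level expansion forces $z_k^2/S(z)$ to be much closer to $1/q$ (since no single coordinate can be pushed to an extreme when it is itself an average of $d$ bounded quantities each of which is an average), which collapses the $f$-dependent geometric interpolation in (i) into the single clean bound $\tfrac{e}{q}(1+\tfrac{K/2}{d+1-K})(1-\tfrac{\min\{9,q+2\}}{d+1})^{-2}$, with $K = K(a) = e + O(1/d)$. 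Concretely I would bound $\max_k z_k^2 \le 1$ and $S(z) \ge q\cdot w^{2d}$-type quantities more sharply using the second level, then optimize. The constant $\min\{9,q+2\}$ presumably comes from splitting into the cases $q \le 7$ and $q \ge 8$ where different elementary estimates on $(1-1/a)^{d}$ are sharpest.

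\textbf{Main obstacle.} The hard part will be the second part's two-level recursion bookkeeping: one must show that expanding Equation~\eqref{eq: recursion} twice genuinely improves the concentration of $z_k^2/S(z)$ around $1/q$ by the right quantitative amount, and then verify that the resulting optimization (over $t\in[0,1]$, over $i\in[q]$, and over the admissible configurations of child square-root ratios) is maximized at the claimed bound. Getting the explicit constants $K \le \min\{q,13\}$ and $\min\{9,q+2\}$ to come out — rather than merely $O(1)$ — will require a careful case analysis on the size of $q$ and tight control of elementary expressions like $(1-\tfrac1a)^d$ and $\tfrac{a-1/2}{a-1}$, which is the kind of routine-but-delicate calculation I would defer to the body of Section~\ref{sec: local weight}.
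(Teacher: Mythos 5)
Your outline captures the right high-level ingredients — coordinate structure of $F$, the split between fixed and free children, and a two-level expansion for part (ii) — but there is a genuine gap in the step that lower bounds $S(Z_v(t))$. You propose bounding each coordinate from below, e.g.\ $z_k^2 \ge w^{d-f}\cdot(\text{free part})$, and then presumably summing to get $S \ge q\,w^{d-f}\cdots$. This is exponentially too weak in $q$. The paper's Lemma~\ref{lemma: bound on S_n} instead applies AM--GM to the \emph{sum}, $S = \sum_k z_k^2 \ge q\bigl(\prod_k z_k^2\bigr)^{1/q}$, and then exploits the multiplicative structure of $\prod_k z_k^2$: interchanging the products over coordinates and over children, each fixed child contributes a single factor of $w$ to $\prod_k z_k^2$ (not a factor $w$ \emph{per coordinate}), so $\prod_k z_k^2 \ge w^{d-f}\cdot(\text{free part})$ and hence $S \ge q\,w^{(d-f)/q}\cdot(\cdots)^{1/q}$. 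The crucial difference is the exponent $1/q$; with your per-coordinate bound you would instead obtain $S \gtrsim q\,w^{d}$, which is of order $q\,e^{-\alpha q}$ rather than the correct $q\,e^{-\alpha}$, and the final $\lambda_v^2$ bound would not be $O(1/q)$. Relatedly, your estimate ``$w^d \ge (1-1/a)^d \approx e^{-1/a}$'' is not right: with $a=(d+1)/q$ one has $(1-1/a)^d\approx e^{-q}$, not $e^{-q/(d+1)}$, which masks exactly this loss.

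The second missing ingredient is how the ``free contribution'' is controlled. After the AM--GM step, each free child $v_j$ contributes $\prod_i F(Z_{v_j})_i^2 = \prod_i\bigl(1-(1-w)\prob[\Phi(v_j)=i]\bigr)$ to the product, and the paper bounds this from below via a Bernoulli-type optimization (Lemma~\ref{lemma: Bernoulli opt}) combined with a priori upper bounds on the marginals $\prob[\Phi(v_j)=i]$ (Lemma~\ref{lem:prob basic}, one-step bound $B(d)$ for part (i), two-step bound $B(0)$ for part (ii)). Your plan does not mention these marginal-probability bounds, and without them there is no way to control the free contribution tightly. Your instinct for part (ii) — that expanding the recursion two levels concentrates things around $1/q$ — is the right \emph{idea}, but the paper executes it via the two-step marginal bound $B(0)$ feeding into the same AM--GM framework, rather than by re-expanding the square-root-ratio recursion directly. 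Once you have $S\ge q/\bigl(\text{bound involving }B(\cdot)\bigr)$ and the separate bound $S_i(z)=S(z)\bigl(1-(1-w)z_i^2/S(z)\bigr)\ge S(z)\bigl(1-(1-w)B(\cdot)\bigr)$, the factorization $\lambda_v^2 \le \frac{1}{S_1}\bigl(1-\frac{1-w}{S_2}\bigr)^{-2}$ with two different lower bounds $S_1,S_2$ gives the claimed shape; the $(1-K(a)/(d+1))^{-2}$ and $(1-\min\{9,q+2\}/(d+1))^{-2}$ factors come from plugging $B(d)\le K(a)/q$ and $B(0)\le K/q$ into this, using the elementary inequality $(1-x)^{-1/x}\le e(1-x/2)/(1-x)$.
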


 In the remainder of this section, we will use the established notation and the two results above to verify Equation~\eqref{eq: sq_goal}, thus proving our main theorems.

\subsection{Inductive proof of WSM}
In this subsection we will prove Theorem~\ref{thm:main wsm}. 
By Lemma~\ref{lemma: ratio to SSM and WSM}, it is sufficient to show that square root ratios contract with factor $r\in (0,1)$. 
Let us write $w=1-\alpha \tfrac{q}{d+1}$ with $\alpha$ to be determined later.
For notational convenience we will omit the dependence of the square root ratios on the variable $w$, i.e. we will write $X_v$ (resp. $X'_v$) instead of $X_v(w)$ (resp. $X'_v(w)$). Similarly we will write $\lambda_v$ instead of $\lambda_v(w)$.

We will prove by induction on $\dist(v,\Lambda)$ that the square root ratios contract with a factor $r\in (0,1)$. 
We claim there exists a constant $C>0$ such that
\begin{align}\label{eq: wsm goal}
\|X_v-X'_v\|^2\le C(\tfrac{d}{d+1})^{\dist(v,\Lambda)},
\end{align}
thus implying that the decay rate is $\tfrac{d}{d+1}.$
Since there are finitely many boundary conditions at a given distance, we may assume that $\dist(v,\Lambda)>3$ by choosing $C$ sufficiently large. 
Additionally, this deals with the base case of the statement in \eqref{eq: wsm goal}.

Now, let us assume the induction hypothesis, which implies that for each neighbor $v_k$ of $v$,
\begin{align}
 \|X_{v_k}-X'_{v_k}\|^2\le C(\tfrac{d}{d+1})^{\dist(v_k,\Lambda)}.
\end{align}
Note that $\dist({v_k},\Lambda) = \dist(v, \Lambda)-1$. 
Denote the number of neighbors of $v$ by $d'$.
Combining the induction hypothesis with Theorem~\ref{thm: norm bound} gives us 
\begin{align*}
\|X_v-X'_v\|^2&\le \frac{(1-w)}{e}\sum_{k=1}^{d'} \lambda_{v_k}^2 Cr^{\dist(v,\Lambda)-1} \leq \frac{\alpha q}{e}\frac{d}{d+1}\max_{k=1,\ldots,d'}\lambda_{v_k}^2 C\left(\tfrac{d}{d+1}\right)^{\dist(v,\Lambda)-1}.
\end{align*}

To complete the induction, it suffices to show that 
\begin{align}\label{eq:required bound WSM}
    \frac{\alpha q}{e}\lambda_{v_k}^2\leq 1
\end{align}
for all $k=1,\ldots, d'$.

By assumption, the neighbors of $v_k$ and their respective neighbors are all free.
So by applying Proposition~\ref{prop: bound lambda_v2}, we conclude that there exists a constant $K'=e+O(1/d)\le \min\{13,q+2\}$ such that for $k=1,\ldots,d',$
\[
    \frac{\alpha q}{e}\lambda_{v_k}^2\le \alpha\left(1+\frac{K'/2}{d+1-K'}\right)\left(1-\frac{K'}{d+1}\right)^{-2}\le 1,
\]
provided that $\alpha^{-1}=\left(1+\frac{K'/2}{d+1-K'}\right)\left(1-\frac{K'}{d+1}\right)^{-2}$. 
Note that this choice of $\alpha$ satisfies $\alpha=(1-K_q''/d)$ for some constant $K_q''=5e/2+O(1/d)\le 40$.




\begin{remark}
The contraction rate of $\frac{d}{d+1}$ has been chosen to simplify the calculations above. 
We note that choosing a rate closer to $1$ only has a small effect if $d$ is large.
It would allow us to replace the constant $K$ by $K-1$, however when  $d$ is small, we expect to have better bounds on the quantities $\lambda_{v_k}$ cf.~Section~\ref{sec:conclusion}.
\end{remark} 
\subsection{Inductive proof of SSM}
In this subsection we will prove Theorem~\ref{thm:main ssm}. 
Similarly to WSM, by Lemma~\ref{lemma: ratio to SSM and WSM}, it is sufficient to show that square root ratios contract with factor $r=\tfrac{d}{d+1}$.
We again let $w=1-\alpha\frac{q}{d+1}$, with $\alpha$ to be determined later.
As above for notational convenience we will omit the dependence of the square root ratios on the variable $w$, i.e. we will write $X_v$ (resp. $X'_v$) instead of $X_v(w)$ (resp. $X'_v(w)$). Similarly we will write $\lambda_v$ instead of $\lambda_v(w)$.

We will prove by induction on $\dist(\Delta_{\tau,\tau'},v)$ that the square-root ratios contract.
More precisely we claim there exists a constant $C>0$ such that 
\begin{align}\label{eq: ssm goal}
\|X_v-X'_v\|^2\le |\mathcal{F}|C(\tfrac{d}{d+1})^{\dist(v,\Delta_{\tau,\tau'})},
\end{align}
where $\mathcal{F}$ denotes the set of free neighbors of $v$.
Since there are only finitely many configurations that could occur in a fixed depth neighborhood, we may assume that $\dist(v,\Delta_{\tau,\tau'})>2$ by choosing $C$ sufficiently large. In other words, by choosing $C$ large enough, we establish the base cases $\dist(v,\Delta_{\tau,\tau'})\in\{0,1,2\}$.

Let us combine Theorem~\ref{thm: norm bound}  with the induction hypothesis to bound $\|X_v-X'_v\|^2$.
We get
\begin{align*}\label{eq: strong ssm}
\|X_v-X'_v\|^2&\le \frac{(1-w)}{e}\sum_{k\in\freen}f_k\lambda_{v_k}^2 C(\tfrac{d}{d+1})^{\dist(v,\Delta_{\tau,\tau'})-1},
\end{align*}
where $f_k$ denotes the number of free neighbors of vertex $v_k$ in the tree $T-v$.
It thus suffices to show that
\[
\frac{(1-w)}{e}\max_{k\in \mathcal{F}}f_k\lambda_{v_k}^2\leq \tfrac{d}{d+1}.
\]

Writing $d+1=aq$, we have by Proposition~\ref{prop: bound lambda_v2}, for any $k\in \mathcal{F}$,
\begin{align}
 f_k\frac{(1-w)}{e}\lambda_{v_k}^2 &\le\frac{\alpha\cdot f_k}{e(d+1)}\left(1-\frac{K(a)}{(d+1)}\right)^{-2}(K(a))^{(d-f_k)/d}\left(e\cdot \frac{d+1-K(a)/2}{d+1-K(a)}\right)^{f_k/d}\nonumber
 \\
 &=\frac{\alpha d}{d+1}\frac{K(a)}{e}\cdot\left(1-\frac{K(a)}{(d+1)}\right)^{-2} \frac{f_k}{d}\left(e\cdot \frac{(d+1)/K(a)-1/2}{d+1-K(a)}\right)^{f_k/d}.  \label{eq:function in f}
\end{align}

We claim that the right hand side of~\eqref{eq:function in f} is maximal when $f_k/d=1$. 
Indeed, the function $x\mapsto Axb^x$ has derivative $Ab^x(x\log b+1)$ and thus its critical point is at $x=-1/\log(b)$, which is larger than $1$ provided $b\geq 1/e$.
Since $a\geq \frac{e-1/2}{e-1}$, we have $K(a)\leq e^2$, and therefore 
\[
(d+1)/K(a)-1/2\geq (d+1-K(a))/e^2.
\]
Indeed, the claim is equivalent to $(d+1)(1/K(a)-1/e^2)\geq 1/2 -K(a)/e^2$, which is automatically true if $K(a)\geq e^2/2$, while if $K(a)\leq e^2/2$ this also follows, since $d+1\geq e^2/2$.
Thus, we conclude that $\left(e\cdot \frac{(d+1)/K(a)-1/2}{d+1-K(a)}\right)\geq 1/e$ and therefore~\eqref{eq:function in f} is indeed maximized at $f_k=d$.

Now, we can plug in $f_k=d$  (and the upper bound of $e^2$ on $K(a)$) into~\eqref{eq:function in f} to conclude that for any $ k\in\mathcal{F}$,
\[
f_k\frac{(1-w)}{e}\lambda_{v_k}^2\leq \alpha \left(1-\frac{e^2}{d+1}\right)^{-2} \cdot \left(\frac{d+1-e^2/2}{d+1-e^2}\right)\frac{d}{d+1}=\frac{d}{d+1},
\]
provided we choose $\alpha^{-1}=\left(1-\frac{e^2}{d+1}\right)^{-2} \cdot \left(\frac{d+1-e^2/2}{d+1-e^2}\right)$.
Noting that this choice of $\alpha$ satisfies $\alpha=(1-K/d)$ for some constant $K$ independent of $q$, this finishes the proof.

\begin{remark}\label{rem:SSM 2}
In case $a<\tfrac{e-1/2}{e-1}$ we can still carry through the induction, but we must then choose $\alpha$ such that~\eqref{eq:function in f} is at most $d/(d+1)$. It is unclear whether the $K$ in the resulting $\alpha=1-K/d$ can be chosen to be independent of $q$.
\end{remark}

\section{A bound on the strength of the contraction}\label{sec:contraction}
It is our goal to prove Theorem~\ref{thm: norm bound} in this section. 
For this, we recall some setup and notation that will be kept fixed and used throughout this section. To this we fix $q,d\in \mathbb{N}_{\geq 2}$ and $w>0$.
Let $(T,v)\in \mathcal{T}_{d+1}$ be a rooted tree and let $\Lambda\subset V(T)\setminus \{v\}$. 
Let $\tau, \tau'$ be two boundary conditions on $\Lambda$, such that $\dist(v,\Delta_{\tau,\tau'})\ge 2$.
For notational convenience, in what follows we will omit the dependence on $w$ for the square root ratios and other functions derived from these square root ratios. 
We will also write $F$ shorthand for $F_w$. 

Denote the square root ratio of $v$ in $T$ for $\tau$ (resp. $\tau'$) by $X_v$ (resp. $X'_v$), and let 
\[
Y_v(t)=tX_v+(1-t)X'_v.
\] 
Denote by $v_1,\ldots,v_{d'}$ the neighbors of $v$ (and note that $d'\leq d$).
For each $j=1,\ldots,d'$, let $X_{v_j}$ (resp. $X'_{v_j}$) be the square root ratios of $v_j$ in the component of $T-v$ containing $v_j$ for the boundary condition $\tau$ restricted to this component (resp. for $\tau'$). 
Define
\begin{align*}
    Y_{v_j}(t)=tX_{v_j} + (1-t)X'_{v_j}.
\end{align*}



By \eqref{eq: recursion} and \eqref{eq: def F} we have the relationships 
\begin{align*}
    X_v=  \prod_{j=1}^{d'} F( X_{v_j}), \quad X'_v =  \prod_{j=1}^{d'} F( X'_{v_j}),
\end{align*}
where $\prod$ denotes the coordinate-wise product of vectors.
Suppose that $v_k$ is fixed for some $k$. 
Since $v_k$ has distance one
from $v$, by assumption $\tau(v_k) = \tau'(v_k)$. 
Therefore, it also follows that $X_{v_k}= X'_{v_k}$, and in particular $F(X_{v_k})=F(X'_{v_k})$.
Moreover, $0 \leq F(X'_{v_k})_i \leq 1$ for all $i$, by the definition of $F$.
Therefore, for each $i=1,\ldots, q,$
\begin{align*}
    |X_{v,i} - X'_{v,i} | &= \Big| \prod_{j=1}^d F( X_{v_j})_i -  \prod_{j=1}^d F(X'_{v_j})_i \Big|\\
    &= \Bigg|F(X_{v_k})_i \Bigg( \prod_{\substack{j=1\\ j \neq k}}^d F( X_{v_j})_i -  \prod_{\substack{j=1\\ j \neq k}}^d F(X'_{v_j})_i \Bigg) \Bigg|\\
    &\leq  \Big| \prod_{\substack{j=1\\ j \neq k}}^d F( X_{v_j})_i -  \prod_{\substack{j=1\\ j \neq k}}^d F(X'_{v_j})_i \Big|.
\end{align*}
For later purposes, let us denote by $\freen=\{j\in [d]~|~\textrm{$v_j$ is not fixed}\}$. 
By the reasoning above, we therefore have for each $i=1,\ldots,q$,
\begin{equation}\label{eq:reduce to free}
    |X_{v,i}-X'_{v,i}|\le \Big|\prod_{j\in \freen}F(X_{v_j})_i-\prod_{j\in \freen}F(X'_{v_j})_i\Big|.
\end{equation}

\begin{lemma} \label{lemma: change to integral}
    Using the notation given above, we have
\begin{equation}\label{eq:int representation}
    \| X_v - X'_v \|^2 \leq \int_0^1  \Bigg\|\frac{\partial}{\partial t}  \Big(\prod_{j\in\freen} F\left( Y_{v_j}(t) \right)\Big) \Bigg\|^2 \, dt.
\end{equation}
\end{lemma}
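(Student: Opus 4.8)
The goal is to prove Lemma~\ref{lemma: change to integral}, establishing the integral bound in~\eqref{eq:int representation}.

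\medskip

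The plan is to combine the pointwise bound~\eqref{eq:reduce to free} with the fundamental theorem of calculus applied to the path $t\mapsto Z_{v_j}(t)$. First I would observe that for each coordinate $i$, the function
\[
g_i(t) \coloneqq \prod_{j\in\freen} F\left(Z_{v_j}(t)\right)_i
\]
is continuously differentiable on $[0,1]$, since each $F$ is smooth on the open positive quadrant (where all the ratio vectors live, by the compactness observation made in the proof of Lemma~\ref{lemma: ratio to SSM}) and $Z_{v_j}(t)$ is an affine path between two such points, hence stays in that quadrant by convexity. At the endpoints we have $g_i(0) = \prod_{j\in\freen} F(Y_{v_j})_i$ and $g_i(1) = \prod_{j\in\freen} F(X_{v_j})_i$, so by the fundamental theorem of calculus
\[
\prod_{j\in\freen}F(X_{v_j})_i - \prod_{j\in\freen}F(Y_{v_j})_i = g_i(1) - g_i(0) = \int_0^1 g_i'(t)\, dt.
\]

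\medskip

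Next I would feed this into~\eqref{eq:reduce to free} and sum over $i$. For each $i$,
\[
|X_{v,i} - Y_{v,i}| \le \left| \int_0^1 g_i'(t)\, dt \right| \le \int_0^1 |g_i'(t)|\, dt,
\]
and then by Jensen's inequality (equivalently, Cauchy--Schwarz on $[0,1]$ against the constant function $1$),
\[
|X_{v,i} - Y_{v,i}|^2 \le \left(\int_0^1 |g_i'(t)|\, dt\right)^2 \le \int_0^1 |g_i'(t)|^2\, dt.
\]
Summing over $i\in[q]$ and interchanging the (finite) sum with the integral gives
\[
\|X_v - Y_v\|^2 \le \sum_{i=1}^q \int_0^1 |g_i'(t)|^2\, dt = \int_0^1 \sum_{i=1}^q |g_i'(t)|^2\, dt = \int_0^1 \left\| \frac{\partial}{\partial t}\Big(\prod_{j\in\freen}F(Z_{v_j}(t))\Big)\right\|^2 dt,
\]
which is exactly~\eqref{eq:int representation}, since $g_i'(t)$ is the $i$-th coordinate of the vector $\tfrac{\partial}{\partial t}\prod_{j\in\freen}F(Z_{v_j}(t))$.

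\medskip

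This argument is essentially a routine chain of standard inequalities, so I do not anticipate a genuine obstacle; the only point requiring mild care is justifying differentiability, i.e. that the affine interpolants $Z_{v_j}(t)$ never leave the open positive quadrant on which $F$ is smooth — this follows from the convexity of that quadrant together with the a priori bounds $w^d \le \tilde R_{T,v;i} \le 1$ (hence $w^{d/2} \le R_{T,v;i} \le 1$) established earlier, which apply equally to $X_{v_j}$ and $Y_{v_j}$. One should also note that the bound~\eqref{eq:reduce to free} already absorbs the fixed neighbors of $v$ (using $0\le F(X_{v_k})_i\le 1$ and $X_{v_k}=Y_{v_k}$), so the product on the right-hand side ranges only over $\freen$, matching the statement.
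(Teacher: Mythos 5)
Your proof is correct and follows essentially the same route as the paper: reduce to the free neighbors via~\eqref{eq:reduce to free}, write the coordinate-wise difference as an integral of the $t$-derivative via the fundamental theorem of calculus, apply Jensen's inequality to move the square inside the integral, and sum over coordinates. The only cosmetic difference is that you pass through $|g_i'(t)|$ before squaring while the paper squares the integral directly, and you spell out the differentiability justification (affine path staying in the open positive quadrant) that the paper leaves implicit; these do not change the substance of the argument.
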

\begin{proof}
By~\eqref{eq:reduce to free}, the $i$-th coordinate of $X_v-X'_v$ satisfies
    \begin{align*}
        (X_{v,i} - X'_{v,i})^2  &\leq\Big(\prod_{j\in \freen} F(X_{v_j})_i -  \prod_{j\in\freen} F(X'_{v_j} )_i\Big)^2 \\
          &=\left(\int_0^1 \frac{\partial}{\partial t}  \Big(\prod_{j\in\freen} F\left(  tX_{v_j} + (1-t)X'_{v_j} \right)_i\Big)   \, dt\right)^2\\
        &\leq\int_0^1 \Big(\frac{\partial}{\partial t}  \Big(\prod_{j\in\freen} F\left( Y_{v_j}(t) \right)_i\Big)\Big)^2   \, dt,
    \end{align*}
    by Jensen's inequality.
 This implies~\eqref{eq:int representation}.
\end{proof}

In order to bound the norm appearing in the integral for each $t\in [0,1]$, we will first give a factorization of the Jacobian of $F$, $DF$.
\begin{lemma}\label{lemma: DF factorization}
The Jacobian of $F$ satisfies
    \[
    DF(x) = \diag\left(  \left( \frac{w-1}{F(x) } \right)  \circ \left( \frac{x}{S(x)} \right)\right)   \left[ I  -  \left( \frac{x}{\sqrt{S(x)}} \right) \left( \frac{x}{\sqrt{S(x)}} \right)^T \right].
    \]  
    Here, for a vector $y$, we use $\diag(y)$ to represent the $ q \times q $ diagonal matrix with the entries of $y $ along its diagonal. 
    The symbol $\circ$ denotes the Hadamard  product of vectors. 
    Additionally, $ 1/F(x)$ denotes the vector with $i-$th coordinate $1/F(x)_i$. 
\end{lemma}

\begin{proof}
    First,  we will compute the partial derivatives of $F$. 
    We find for $i \neq j$ that
    \begin{align*}
        \pdv{F(x)_i }{x_j} =  \dfrac{1}{2 \sqrt{ \dfrac{S_i(x)}{S(x)} }} \cdot  (w-1)\left( 0- \frac{2 x_i^2 x_j  }{S(x)^2}  \right) = \frac{1}{F(x)_i} \cdot (w-1) \frac{-x_i^2 x_j}{S(x)^2},
    \end{align*}
    and
    \begin{align*}
        \pdv{F(x)_i }{x_i} =\dfrac{1}{2 \sqrt{ \dfrac{S_i(x)}{S(x)} }} \cdot  (w-1)\left(\frac{ 2x_i S(x)- 2 x_i^3  }{S(x)^2}  \right)  = \dfrac{1}{F(x)_i} \cdot  (w-1)\left(\frac{ x_i S(x) -  x_i^3  }{S(x)^2}  \right) .
    \end{align*}
    We obtain:
    \begin{align*}
        DF(x) & =  (w-1) \diag\left( \frac{1}{F(x)} \right) \left[ \diag \left( \frac{x}{S(x)}  \right)   - \left(\frac{(x\circ x) x^T}{S(x)^2} \right) \right]\\
        &=(w-1) \diag\left( \frac{1}{F(x)} \right) \left[ \diag \left( \frac{x}{S(x)}  \right)   -  \left( \frac{x\circ x}{S(x)} \right) \left( \frac{x}{S(x)} \right)^T \right]\\
        &= (w-1) \diag\left( \frac{1}{F(x)} \right) \diag \left( \frac{x}{S(x)}  \right)  \left[ I  -  x  \left( \frac{x}{S(x)} \right)^T \right]\\
        & =\diag\left( \left( \frac{w-1}{F(x) }  \right) \circ \left( \frac{x}{S(x)} \right)\right)   \left[ I  -  \left( \frac{x}{\sqrt{S(x)}} \right) \left( \frac{x}{\sqrt{S(x)}} \right)^T \right].
    \end{align*}
    
\end{proof}
	
	From~\cite{chen2023strong} we have the following lemma, that we will use to bound the norm appearing in the integral of Lemma~\ref{lemma: change to integral}.
	\begin{lemma}
		For any positive integers $d,q \geq 1$, diagonal matrices $D_1, \ldots, D_d \in \R^{q\times q}$, and vectors $\bm{x}_1, \ldots, \bm{x}_d \in \R^q$, we have 
		\begin{align*}
			\Big\| \sum_{j=1}^d D_i \bm{x}_j \Big\|_2^2    \leq \max_{i \in [q] }\sum_{j=1}^d D_j(i,i)^2\cdot  \sum_{j=1}^d \|\bm{x_}j \|_2^2.
		\end{align*}
		\label{B1Chen}
	\end{lemma}
	
	\begin{lemma}\label{lemma: bound on the integral}
		For any $t\in[0,1]$ we have
		\begin{align}
			&\left\|\frac{\partial}{\partial t} \prod_{j\in\freen} F\left( Y_{v_j}(t) \right) \right\|^2 \le \nonumber
   \\
   \quad & \max_{ i \in [q]}\left( \sum_{k\in\freen} 
			\left[\prod_{j \in\freen} F \left( Y_{v_j}(t) \right)_i^2 \right]  \frac{(1-w)^2 Y_{v_k,i}^2}{S(Y_{v_k}) }   \right) \cdot  \sum_{k\in\freen}\norm{  \lambda_{v_k}  \left(X_{v_k}-X'_{v_k} \right)  }^2. \label{eq: bound on the integral}
		\end{align}
	\end{lemma}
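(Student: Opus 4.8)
The strategy is to differentiate the product $\prod_{j\in\freen}F(Z_{v_j}(t))$ using the product rule, express each term via the Jacobian factorization from Lemma~\ref{lemma: DF factorization}, and then apply Lemma~\ref{B1Chen} to bound the resulting sum of diagonal-matrix-times-vector terms.

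First I would compute, for each coordinate $i\in[q]$,
\[
\frac{\partial}{\partial t}\prod_{j\in\freen}F(Z_{v_j}(t))_i=\sum_{k\in\freen}\Bigl(\prod_{\substack{j\in\freen\\ j\neq k}}F(Z_{v_j}(t))_i\Bigr)\cdot\frac{\partial}{\partial t}F(Z_{v_k}(t))_i,
\]
and note that $\frac{\partial}{\partial t}F(Z_{v_k}(t))=DF(Z_{v_k}(t))\,\dot Z_{v_k}(t)$ with $\dot Z_{v_k}(t)=X_{v_k}-Y_{v_k}$. Writing $\omega_k^{(i)}(t):=\prod_{j\in\freen,\,j\neq k}F(Z_{v_j}(t))_i$, the vector $\frac{\partial}{\partial t}\prod_j F(Z_{v_j}(t))$ equals $\sum_{k\in\freen}\diag\bigl(\omega_k^{(i)}(t)\bigr)_i\,DF(Z_{v_k}(t))(X_{v_k}-Y_{v_k})$, which is a sum of diagonal matrices applied to vectors. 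To invoke Lemma~\ref{B1Chen} I would set $D_k=\diag(\omega_k^{(i)}(t))_i\cdot\diag\bigl(\tfrac{w-1}{F(Z_{v_k})_i}\cdot\tfrac{Z_{v_k,i}}{S(Z_{v_k})}\bigr)_i$ (the diagonal factor of $DF$ from Lemma~\ref{lemma: DF factorization}) and $\bm{x}_k=\bigl[I-\bigl(\tfrac{Z_{v_k,i}}{\sqrt{S(Z_{v_k})}}\bigr)_i\bigl(\tfrac{Z_{v_k,i}}{\sqrt{S(Z_{v_k})}}\bigr)_i^T\bigr](X_{v_k}-Y_{v_k})$. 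Then Lemma~\ref{B1Chen} gives the bound
\[
\Bigl\|\tfrac{\partial}{\partial t}\prod_{j\in\freen}F(Z_{v_j}(t))\Bigr\|^2\le\max_{i\in[q]}\sum_{k\in\freen}D_k(i,i)^2\cdot\sum_{k\in\freen}\|\bm{x}_k\|^2.
\]

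It remains to identify $D_k(i,i)^2$ and to bound $\|\bm{x}_k\|^2$. For the first factor, $\prod_{\substack{j\neq k}}F(Z_{v_j})_i^2\cdot F(Z_{v_k})_i^{-2}\cdot\tfrac{(w-1)^2 Z_{v_k,i}^2}{S(Z_{v_k})^2}$; multiplying top and bottom by $F(Z_{v_k})_i^2=S_i(Z_{v_k})/S(Z_{v_k})$ rewrites the $F(Z_{v_k})_i^{-2}S(Z_{v_k})^{-1}$ as $S(Z_{v_k})\cdot S_i(Z_{v_k})^{-2}$... more carefully: $F(Z_{v_k})_i^{-2}\cdot S(Z_{v_k})^{-2}=\bigl(S(Z_{v_k})/S_i(Z_{v_k})\bigr)\cdot S(Z_{v_k})^{-2}=S_i(Z_{v_k})^{-2}\cdot S(Z_{v_k})^{-1}$, so
\[
D_k(i,i)^2=\prod_{j\in\freen}F(Z_{v_j})_i^2\cdot\frac{(1-w)^2 Z_{v_k,i}^2}{S_i(Z_{v_k})^2\,S(Z_{v_k})}\cdot\frac{S(Z_{v_k})}{S_i(Z_{v_k})}\Big/\frac{1}{1},
\]
which should be organized so that one factor $S(Z_{v_k})/S_i(Z_{v_k})^2=\lambda_{v_k}^2\cdot(\text{something})$ emerges — indeed $S(Z_{v_k})/S_i(Z_{v_k})^2\le\lambda_{v_k}^2$ by definition~\eqref{eq:def weight}, and $\tfrac{(1-w)^2 Z_{v_k,i}^2}{S(Z_{v_k})}$ is exactly the factor appearing inside the max in~\eqref{eq: bound on the integral}; the $\prod_j F(Z_{v_j})_i^2$ there should be read with the convention that the $k$-th factor is present. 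For the second factor, $\|\bm{x}_k\|^2$: the matrix $I-uu^T$ with $\|u\|\le1$ (here $u=(Z_{v_k,i}/\sqrt{S(Z_{v_k})})_i$ has squared norm exactly $1$, so $I-uu^T$ is the orthogonal projection onto $u^\perp$) is a contraction, hence $\|\bm{x}_k\|^2\le\|X_{v_k}-Y_{v_k}\|^2$. Combining, $\sum_{k}\|\bm{x}_k\|^2\le\sum_k\|X_{v_k}-Y_{v_k}\|^2$, and since the definition of $\lambda_{v_k}$ as a maximum over $t\in[0,1]$ and $i$ makes $\lambda_{v_k}^2\ge S(Z_{v_k}(t))/S_i(Z_{v_k}(t))^2$ uniformly, one can pull $\lambda_{v_k}^2$ out to get $\sum_k\|\lambda_{v_k}(X_{v_k}-Y_{v_k})\|^2$ as the right-hand factor.

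\textbf{Main obstacle.} The delicate bookkeeping is in the first factor: tracking exactly which $F(Z_{v_j})_i$ factors survive after cancelling $F(Z_{v_k})_i^{-2}$ against the product, and matching the power of $S(Z_{v_k})$ so that precisely a factor $\lambda_{v_k}^2=\max\sqrt{S(Z_{v_k})}/S_i(Z_{v_k})$ — note this is $S/S_i^2$ after squaring — is isolated, leaving the clean residual $(1-w)^2Z_{v_k,i}^2/S(Z_{v_k})$ times the full product $\prod_{j\in\freen}F(Z_{v_j})_i^2$. One must be careful that the $\lambda_{v_k}^2$ appearing can legitimately be factored out of the $\max_i$ on the left, which works because $\lambda_{v_k}$ is defined as a maximum over all $i$ and $t$ and so dominates the $i,t$-dependent ratio pointwise; the remaining $i$-dependent quantity then stays inside the $\max_i$, yielding exactly~\eqref{eq: bound on the integral}. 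Everything else (the contraction property of $I-uu^T$, the product rule, Jensen already handled in Lemma~\ref{lemma: change to integral}) is routine.
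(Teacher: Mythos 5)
Your overall plan—product rule, chain rule, the Jacobian factorization from Lemma~\ref{lemma: DF factorization}, Lemma~\ref{B1Chen}, and the contraction property of $I-\pi\pi^T$—matches the paper's ingredients, but the decomposition you feed into Lemma~\ref{B1Chen} is the wrong one, and the step where you ``pull $\lambda_{v_k}^2$ out'' does not go through.

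Concretely, you set $D_k$ to be the entire diagonal factor of the $k$th summand (including the $\tfrac{w-1}{F(Z_{v_k})_i}\cdot\tfrac{Z_{v_k,i}}{S(Z_{v_k})}$ part) and $\bm{x}_k$ to be only the projected difference. After Lemma~\ref{B1Chen} this produces
\[
\max_{i\in[q]}\sum_{k\in\freen}D_k(i,i)^2\cdot\sum_{k\in\freen}\|\bm{x}_k\|^2,
\]
and your bookkeeping (despite a small slip where you wrote $S_i^{-2}S^{-1}$ for what should be $S_i^{-1}S^{-1}$, but this cancels out later) correctly identifies $D_k(i,i)^2=\bigl[\prod_j F(Z_{v_j})_i^2\bigr]\cdot\tfrac{(1-w)^2Z_{v_k,i}^2}{S_i(Z_{v_k})^2}$, which is at most $\lambda_{v_k}^2\cdot\bigl[\prod_j F(Z_{v_j})_i^2\bigr]\cdot\tfrac{(1-w)^2Z_{v_k,i}^2}{S(Z_{v_k})}$. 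The trouble is that this $\lambda_{v_k}^2$ is now trapped inside the sum over $k$ under the $\max_i$, while the lemma needs it on the \emph{other} sum, multiplying $\|X_{v_k}-Y_{v_k}\|^2$. These are genuinely different quantities: for $k$-dependent weights there is no inequality of the form $\max_i\sum_k\lambda_{v_k}^2 a_{i,k}\cdot\sum_k b_k\le\max_i\sum_k a_{i,k}\cdot\sum_k\lambda_{v_k}^2 b_k$, and one easily constructs small examples where it fails in either direction. So the last sentence of your ``Main obstacle'' paragraph, which is precisely where you would need to justify this transfer, is the gap.

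The fix the paper uses is to split the diagonal factor of $DF$ before invoking Lemma~\ref{B1Chen}: write $\tfrac{w-1}{F(Z_{v_k})_i^2}\cdot\tfrac{Z_{v_k,i}}{S(Z_{v_k})}=\tfrac{(w-1)Z_{v_k,i}}{\sqrt{S(Z_{v_k})}}\cdot\tfrac{\sqrt{S(Z_{v_k})}}{S_i(Z_{v_k})}$ (after absorbing one $F(Z_{v_k})_i$ into the $\prod_{j\neq k}$ to make it $\prod_j$), and then take $D_k$ to carry only the factor $\bigl[\prod_j F(Z_{v_j})_i\bigr]\tfrac{(w-1)Z_{v_k,i}}{\sqrt{S(Z_{v_k})}}$, while $\bm{x}_k=\diag\bigl(\tfrac{\sqrt{S(Z_{v_k})}}{S_i(Z_{v_k})}\bigr)\alpha_{v_k}$. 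Then $\|\bm{x}_k\|\le\lambda_{v_k}\|\alpha_{v_k}\|\le\lambda_{v_k}\|X_{v_k}-Y_{v_k}\|$, and the $\lambda_{v_k}$ lands in the correct sum from the start. This split, not a post-hoc rearrangement, is what makes the stated inequality come out.
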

	\begin{proof}
		
		Notice that the product in the LHS of~\eqref{eq: bound on the integral} is component-wise. 
		Using the product rule for the $i$-th coordinate, we get that
		\begin{align*}
			\frac{\partial}{\partial t}   \prod_{j\in\freen} F\left( Y_{v_j}(t) \right)_i & = \sum_{k\in\freen}  \pdv{F \left( Y_{v_k}(t) \right)_i}{t} \prod_{\substack{j\in\freen\\j\neq k}} F \left( Y_{v_j}(t) \right)_i.
		\end{align*}
		Now we focus on $\pdv{}{t} F(Y_{v_k}(t))$. Using the chain rule, we obtain
		\begin{align*}
			\pdv{F \left( Y_{v_k}(t) \right)}{t} & = DF(Y_{v_k}(t)) \cdot \left( X_{v_k} - X'_{v_k} \right).\\
		\end{align*}
		This implies that the LHS of~\eqref{eq: bound on the integral} becomes
		\begin{align*}
			&\left\|\sum_{k\in\freen} \bigg( DF(Y_{v_k}(t)) \cdot \left( X_{v_k} - X'_{v_k}  \right) \bigg)\circ \prod_{\substack{j\in\freen\\j \neq k}} F \left( Y_{v_j}(t) \right)  \right\|^2  \\ =&    \left\|\sum_{k\in\freen}  \diag\left( \prod_{\substack{j\in\freen\\j \neq k}} F \left( Y_{v_j}(t) \right) \right) \cdot DF(Y_{v_k}(t)) \cdot \left( X_{v_k} - X'_{v_k}  \right)    \right\|^2, 
		\end{align*}

  In what follows we will omit the variable $t$ from the notation and just write $Y_{v_k}$ instead of $Y_{v_k}(t)$.
		From Lemma~\ref{lemma: DF factorization} we know that   $DF(Y_{v_k}(t)) \cdot \left( X_{v_k} - X'_{v_k}  \right)$ is equal to
		\begin{align*}
\diag\left( \left( \frac{w-1}{F(Y_{v_k}) }  \right) \circ  \left( \frac{Y_{{v_k}}}{S(Y_{v_k})}\right) \right)  \left[ I  -  \left( \frac{Y_{{v_k}}}{\sqrt{S(Y_{v_k})}} \right) \left( \frac{Y_{{v_k}}}{\sqrt{S(Y_{v_k})}} \right)^T \right]\cdot \left( X_{v_k} - X'_{v_k}  \right).
		\end{align*}
		
		Let us denote by $\pi(Y_{v_j})_i = Y_{{v_j},i} \big/\sqrt{S(Y_{v_j})}$, and $\alpha_{v_j} = \left[ I - \pi(Y_{v_j}) \pi(Y_{v_j})^T\right] \cdot( X_{v_j} - X'_{v_j}) $. 
		Using these notations, we obtain that the  LHS of~\eqref{eq: bound on the integral}  is equal to
		

  \begin{align*}
&\left\|\sum_{k\in\freen} \diag\left( \prod_{\substack{j\in\freen\\j \neq k}} F \left( Y_{v_j} \right) \right) \diag\left( \left(\frac{w-1}{F(Y_{v_k})} \right) \circ \left( \frac{Y_{v_k}}{S(Y_{v_k})}\right) \right) \alpha_{v_k} \right\|^2 \\
=&\left\|\sum_{k\in\freen} \diag\left( \prod_{j\in\freen} F \left( Y_{v_j} \right) \right) \diag\left( \left( \frac{w-1}{F(Y_{v_k})^2}  \right) \circ \left( \frac{Y_{v_k}}{S(Y_{v_k})}\right)  \right)\alpha_{v_k} \right\|^2 \\
=&\left\|\sum_{k\in\freen} \diag\left( \prod_{j \in\freen} F \left( Y_{v_j} \right) \right) \diag\left( \left( \frac{(w-1) Y_{v_k}}{\sqrt{S(Y_{v_k})}}  \right) \circ \left( \frac{1}{\sqrt{S(Y_{v_k})} F(Y_{v_k})^2}\right) \right) \alpha_{v_k} \right\|^2 \\
=&\left\|\sum_{k\in\freen} \diag\left( \left[\prod_{j \in\freen} F \left( Y_{v_j} \right) \right] \circ \left( \frac{(w-1) Y_{v_k}}{\sqrt{S(Y_{v_k})}} \right) \right) \diag\left( \frac{\sqrt{S(Y_{v_k})}}{S_i(Y_{v_k})}\right) \alpha_{v_k} \right\|^2 ,
\end{align*}

where $\diag\left( \sqrt{S(Y_{v_k})}\big/ S_i(Y_{v_k})\right)$ denotes the $q \times q$ diagonal matrix with $\sqrt{S(Y_{v_k})}/S_i(Y_{v_k})$ on the $i-$th diagonal entry.
The last step of the computation follows from the identity $F(Y_{v_k})_i^2 = S_i(Y_{v_k}) \big/ S(Y_{v_k}).$
Applying Lemma \ref{B1Chen}, we can bound this by

  \begin{align}
& \max_{ i \in [q]}\left\{ \sum_{k\in\freen} \left( 
\left[\prod_{j \in\freen} F \left( Y_{v_j} \right)_i \right]  \frac{(w-1) Y_{v_k,i}}{ \sqrt{S(Y_{v_k})} } \right)^2 \right\} \sum_{k\in\freen} \left\| \diag\left( \frac{\sqrt{S(Y_{v_k})}}{S_i(Y_{v_k}) }\right) \alpha_{v_k} \right\|^2 \nonumber \\
\leq &\max_{ i \in [q]}\left( \sum_{k\in\freen}
\left[\prod_{j \in\freen} F \left( Y_{v_j} \right)_i^2 \right]  \frac{(1-w)^2 Y_{v_k,i}^2}{S(Y_{v_k}) } \right) \sum_{k\in\freen} \norm{  \lambda_{v_k}  \alpha_{v_k}  }^2 .\label{eq: alpha vector bound}
\end{align}
Recalling the definition of $\lambda_{v_k}$ from ~\eqref{eq:def weight}, we have that
  \[
		\max_{i\in [q]}\frac{\sqrt{S(Y_{v_k}(t))}}{S_i(Y_{v_k}(t))}\leq \lambda_{v_k}.
		\]

Since $\pi(Y_{v_j})$ has norm $1$, it follows that $I - \pi(Y_{v_j}) \pi(Y_{v_j})^T$ has operator norm at most $1$. 
We can therefore further upper bound the expression in~\eqref{eq: alpha vector bound} by
\begin{equation*}
\max_{i \in [q]}\left\{ \sum_{k\in\freen} \left( 
\left[\prod_{j \in\freen} F \left( Y_{v_j}(t) \right)_i^2 \right] \frac{(1-w)^2 Y_{v_k,i}^2}{S(Y_{v_k}) } \right) \right\} \cdot \sum_{k\in\freen} \norm{ \lambda_{v_k} \left(X_{v_k}-X'_{v_k} \right) }^2,
\label{eq: RHS bound on integral}
\end{equation*}
finishing the proof.

	\end{proof}

 We next bound the maximum, after extracting one $(1-w)$ factor, appearing in Equation~\ref{eq: bound on the integral}.

	\begin{lemma}\label{lem: beta bound}
		Using the previous notation, for any $t\in[0,1]$ we have 
		\begin{equation}
			\left[ \prod_{j \in\freen} F \left( Y_{v_j}(t) \right)_i^2 \right] \cdot   \sum_{k\in\freen}  \frac{(1-w)Y_{v_k,i}(t)^2}{S(Y_{v_k}(t))}  \leq e^{-1}.
			\label{eq: beta bound}
		\end{equation}
		\end{lemma}
	
	\begin{proof}
		Set $ \beta_{v_j} \coloneqq (1-w) \frac{Y_{v_j,i}(t)^2}{S(Y_{v_j}(t))}$. 
		Note that $F(Y_{v_j}(t))_i^2 = 1-\beta_{v_j}$.
		The LHS of ~\eqref{eq: beta bound} becomes
		\[
		\prod_{j\in\freen}(1-\beta_{v_j}) \sum_{k\in\freen} \beta_{v_k}.
		\]
		
		Note that $\beta_{v_j} >0$ and $1-\beta_{v_j}\ge 0$ for every $j\in\freen$.
  	We upper bound the expression with AM-GM to obtain
		\begin{align*}
			\prod_{j\in\freen} (1- \beta_{v_j})  \cdot \sum_{k\in\freen} \beta_{v_k} \le \left(\frac{|\freen|}{|\freen|+1}\right)^{|\freen|+1}\le \left(\frac{d}{d+1}\right)^{d+1}\le e^{-1}.
		\end{align*}
		
	\end{proof}
	
By combining the previous two lemmas, we obtain
\[
    \|X_v-X'_v\|^2\le\frac{1-w}{e}\sum_{k\in\freen}\|\lambda_{v_k}(X_{v_k}-X'_{v_k})\|^2,
\]
and thereby proving Theorem~\ref{thm: norm bound}.

\section{Bounds on the local weights } \label{sec: local weight}
To proceed, we have to establish a bound on the local weights $\lambda_v^2$. 
Since the definition of $\lambda_v$ involves $S=S(Y_v(w;t))$, which in turn depends on the marginal distribution of the root vertex, we first spend some time deriving suitable bounds for these.

\subsection{Bounds on the marginal of the root vertex}
The following lemma is very similar to \cite{chen2023strong}[Lemma~B.2] and will be used to prove our bounds below.
\begin{lemma}\label{lemma: Bernoulli opt}
    Assume that for any $i=1,\dots,q$, we have $0\le x_i\le b\le 1$ and $\sum_{i=1}^q x_i=1$. 
    Then, for any $\alpha\in [0,1]$, we have
    \begin{equation}\label{eq:product bound}
        \prod_{i=1}^q(1-\alpha x_i) \ge (1-\alpha b)^{1/b}.
    \end{equation}
\end{lemma}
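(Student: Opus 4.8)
The plan is to reduce the inequality $\prod_{i=1}^q(1-\alpha x_i)\ge (1-\alpha b)^{1/b}$ to a convexity statement about a single-variable function, and then invoke a standard ``extremal configuration'' argument to show the product is minimized when as many coordinates as possible are pushed to the boundary value $b$. First I would take logarithms: since each factor $1-\alpha x_i\in(0,1]$ (using $0\le x_i\le b\le 1$ and $\alpha\in[0,1]$), it suffices to prove
\[
\sum_{i=1}^q \log(1-\alpha x_i)\ \ge\ \frac{1}{b}\log(1-\alpha b).
\]
The key observation is that the function $g(x)=\log(1-\alpha x)$ is concave on $[0,b]$, but what we actually need is a comparison with the \emph{linear interpolant} through the endpoints $x=0$ and $x=b$: namely $g(x)\ge \tfrac{x}{b}g(b)+\bigl(1-\tfrac{x}{b}\bigr)g(0)=\tfrac{x}{b}\log(1-\alpha b)$, which holds precisely because $g$ is concave and $g(0)=0$. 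Summing this over $i$ and using $\sum_i x_i=1$ gives
\[
\sum_{i=1}^q \log(1-\alpha x_i)\ \ge\ \sum_{i=1}^q \frac{x_i}{b}\log(1-\alpha b)\ =\ \frac1b\log(1-\alpha b),
\]
which is exactly what we want (note $\log(1-\alpha b)\le 0$, so this is a genuine lower bound, and the direction of the inequality is consistent with concavity). Exponentiating recovers \eqref{eq:product bound}.

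Actually this is clean enough that the ``extremal configuration'' framing is unnecessary — the per-coordinate concavity bound does everything in one line once set up. The one point that needs a word of care is the edge case $\alpha b=1$ (possible only if $\alpha=b=1$): then the right-hand side is $0^{1}=0$ and the left-hand side is a product of nonnegative terms, so the inequality is trivial; I would dispose of this first so that all logarithms are finite. I should also note $\alpha=0$ is trivial (both sides equal $1$), so assume $0<\alpha$ and $\alpha b<1$ for the main argument.

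The main (and really only) obstacle is getting the concavity comparison pointed in the right direction and making sure the constraint $\sum_i x_i = 1$ is used correctly — concavity of $g$ gives $g(x)\ge$ (chord) on the interval between the evaluation points, which is the \emph{opposite} of the usual Jensen direction, so one must phrase it as ``$g$ lies above its chord'' rather than ``below its tangent.'' Once that is stated correctly, the summation and the normalization $\sum x_i=1$ finish the proof immediately, with no further estimates required. I expect the author's proof to follow essentially this route, possibly phrased via the substitution $x_i = b\,y_i$ with $\sum y_i = 1/b$ to mirror the structure of \cite{chen2023strong}[Lemma~B.2], but the content is the same concavity-above-the-chord inequality.
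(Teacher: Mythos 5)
Your proof is correct, and it is genuinely simpler than the paper's. The paper first runs a perturbation (``smoothing'') argument: it assumes a minimizer exists, orders the coordinates, and shows via a local quadratic-in-$t$ computation that the product only decreases if you push a non-extremal pair of coordinates toward $\{0,b\}$; this pins down the extremal configuration $0=x_1=\cdots=x_\ell < x_{\ell+1}\le x_{\ell+2}=\cdots=x_q=b$, after which Bernoulli's inequality is applied to the single leftover coordinate $x_{\ell+1}$. You observe that the pointwise inequality $1-\alpha x \ge (1-\alpha b)^{x/b}$ for $x\in[0,b]$ (which is exactly the Bernoulli step the paper uses, or equivalently concavity of $x\mapsto\log(1-\alpha x)$ lying above its chord through $(0,0)$ and $(b,\log(1-\alpha b))$) can be applied to \emph{every} coordinate at once; the constraint $\sum_i x_i=1$ then collapses the product of exponents to $1/b$ with no extremal analysis needed. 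This is a strict simplification: it avoids the existence-of-minimizer, ordering, and smoothing steps entirely, and your handling of the edge cases $\alpha=0$ and $\alpha b=1$ is appropriate (the latter can only occur when $\alpha=b=1$, forcing $q=1$ and $x_1=1$, and both sides vanish). The two proofs buy the same bound; yours is shorter and, arguably, more transparent about where the $1/b$ exponent comes from.
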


\begin{proof}
Assume that $\{x_i\}_{i=1}^q$ is a minimizer of the left hand side. By symmetry we may assume that $0\le x_1\le \dots\le x_q\le b$.

First consider the function
\[
    g(t)=(1-\alpha (x_i-t))(1-\alpha (x_j+t)),
\]
where $x_i\le x_j$.
Then $g(t)$ is a quadratic function in $t$ with a negative leading coefficient and a negative linear term, and therefore $g(t)$ is monotonically decreasing for $t\ge 0$. 
Thus, if $0\neq x_i\le x_j\neq b$, then there exists $\{x_i'\}_{i=1}^q$ that is better, contradicting the choice of the $x_i$'s.
This means that the minimum of the LHS of ~\eqref{eq:product bound} is achieved when 
\[
0=x_1=\dots=x_\ell<x_{\ell+1}\le x_{\ell+2}=\dots=x_q=b,
\]
where $\ell= q- \lfloor 1/b\rfloor$ and $x_{\ell+1}=1-(q-\ell-1)b$.

By the Bernoulli-inequality we know that
\[
    (1-\alpha x_{\ell+1})\ge (1-\alpha b)^{x_{\ell+1}/b}=(1-\alpha b)^{1/b-(q-\ell-1)},
\]
and thus
\[
    \prod_{i=0}^q (1-\alpha x_i)\ge (1-\alpha b)^{1/b},
\]
proving the lemma.
\end{proof}
Define for $\ell=0,\ldots,d$ and $w\in[0,1]$
\begin{align}
M(w):&=\left(1-\frac{1-w}{1+(q-1)w^{d/(q-1)}}\right)^{{1+(q-1)w^{d/(q-1)}}}, \quad \text{and} \nonumber
    \\
    B(\ell):&=\frac{1}{1+(q-1)w^{\ell/(q-1)}M(w)^{(d-\ell)/(q-1)}}.\label{eq: def B}
\end{align}
Note that $M(w)>w$ by Bernoulli's inequality, thus $B(0)< B(1)< \dots < B(d)< 1$.

The next result provides a bound for the marginal probabilities of the root vertex. Item (i) is standard but we include it (with a proof) for completeness. Item (ii) follows along the same lines and is likely known too, but we don't know of an explicit reference for it.
\begin{lemma}\label{lem:prob basic}
    Let $(T,\Lambda,\tau)$ be a partially $q$-colored tree and let $v$ be a free vertex of $T$ of degree $d$. 
    Then
    \begin{enumerate}[label = (\roman*) ]
      \item (One step bound) for any $w\in [0,1]$ and any color $i$, it holds that
      	\begin{align*}
    			\prob_{T;w}[\Phi(v)=i\mid \tau]\leq B(d)=\frac{1}{1+(q-1)w^{d/(q-1)}}.
    		\end{align*}
      
      \item (Two step bound) Additionally, assume that exactly $\ell$ number of neighbors of $v$ are precolored and all the free neighbors of $v$ have degree at most $d+1$. 
      Then for any $w\in [0,1)$ and any color $j$, 
        \[
        \prob_{T;w}[\Phi(v)=i\mid \tau]\le B(\ell).
        \]  
\end{enumerate}
\end{lemma}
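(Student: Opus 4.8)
The plan is to use the tree recursion for the marginal probabilities together with Lemma~\ref{lemma: Bernoulli opt} applied in a careful way. Recall from Equation~\eqref{eq: observation2} that, writing $v_1,\dots,v_d$ for the children of $v$ and $T_{v_j}$ for the subtree hanging off $v_j$,
\[
\widetilde R_{T,v;i}(w)=\prod_{j=1}^{d}\bigl(1-(1-w)p_{j,i}\bigr),\qquad p_{j,i}:=\prob_{T_{v_j}}[\Phi(v_j)=i],
\]
and by Equation~\eqref{eq: observation1} the marginal at $v$ is $\prob_{T,w}[\Phi(v)=i\mid\tau]=\widetilde R_{T,v;i}/\sum_{k}\widetilde R_{T,v;k}$. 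For (i) I would argue that for fixed $j$ the vector $(p_{j,i})_{i\in[q]}$ is a probability vector, so summing Equation~\eqref{eq: observation2} over $i$ and invoking Lemma~\ref{lemma: Bernoulli opt} with $b=1$ (so $(1-\alpha b)^{1/b}=w$) gives $\sum_i\widetilde R_{T,v;i}\ge q\cdot w^{?}$ — more precisely I would bound the denominator from below by noting $\widetilde R_{T,v;i}\ge w^{d}$ for every $i$ and, more sharply, that $\sum_{k\ne i}\widetilde R_{T,v;k}\ge (q-1)$ times a geometric-mean lower bound. The cleanest route: fix the target color $i$, apply AM–GM / Jensen across the other $q-1$ colors. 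Using that $\sum_{j}p_{j,k}$-type sums are controlled, one gets $\widetilde R_{T,v;k}\ge w^{d/(q-1)}$ on average over $k\ne i$ (each $T_{v_j}$ contributes a factor, and the "mass" $1-p_{j,i}$ is spread over $q-1$ colors), yielding $\sum_{k\ne i}\widetilde R_{T,v;k}\ge (q-1)w^{d/(q-1)}\widetilde R_{T,v;i}$ after the right convexity argument, hence $\prob_{T,w}[\Phi(v)=i\mid\tau]\le 1/(1+(q-1)w^{d/(q-1)})=B(d)$.

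For (ii) the same recursion is applied one level deeper. The key refinement is that each free child $v_j$ has degree at most $d+1$ in $T$, hence at most $d$ in $T_{v_j}$, so by part (i) applied to $v_j$ we have $p_{j,i}\le B(d)=1/(1+(q-1)w^{d/(q-1)})$; meanwhile each of the $\ell$ precolored children contributes a factor that is either $1$ (color $\ne i$) or $w$ (color $=i$) to $\widetilde R_{T,v;i}$, i.e. it behaves exactly like a child with $p_{j,i}\in\{0,1\}$. The plan is then to re-run the denominator-versus-numerator comparison from part (i), but now the relevant upper bound $b$ on the per-child marginals $p_{j,i}$ is $B(d)$ for the $d-\ell$ free children and $1$ for the $\ell$ fixed ones (fixed children with $p=1$ contribute a full factor $w$, fixed with $p=0$ contribute $1$, and the worst case for making the marginal at $v$ large is all $\ell$ fixed children colored $i$). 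Applying Lemma~\ref{lemma: Bernoulli opt} to the $d-\ell$ free children with $b=B(d)$ — note $(1-(1-w)B(d))^{1/B(d)}$ is essentially $M(w)$ once one checks $1+(q-1)w^{d/(q-1)}=1/B(d)$, so $M(w)=(1-(1-w)B(d))^{1/B(d)}$ — gives that the "spread-out" contribution of each free child to the colors $k\ne i$ is at least $M(w)^{1/(q-1)}$ per child, while each fixed child contributes at least $w^{?}$; bookkeeping the exponents $\ell$ and $d-\ell$ separately produces exactly the denominator $1+(q-1)w^{\ell/(q-1)}M(w)^{(d-\ell)/(q-1)}$, i.e. $\prob_{T,w}[\Phi(v)=i\mid\tau]\le B(\ell)$.

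The main obstacle is the convexity/symmetrization step that converts "each child's marginal is $\le b$" into the lower bound "$\sum_{k\ne i}\widetilde R_{T,v;k}\ge (q-1)\,c\,\widetilde R_{T,v;i}$" with the correct constant $c$ (namely $w^{d/(q-1)}$ in part (i), and $w^{\ell/(q-1)}M(w)^{(d-\ell)/(q-1)}$ in part (ii)): one must take logarithms, use that $\sum_{k\in[q]}p_{j,k}=1$ for each free child and $p_{j,i}\le b$, and apply Lemma~\ref{lemma: Bernoulli opt} child-by-child to $\prod_j(1-(1-w)p_{j,k})$ averaged over $k\ne i$, being careful that the extremal configuration in Lemma~\ref{lemma: Bernoulli opt} (mass concentrated at the cap $b$ and at $0$) is exactly the one that is simultaneously worst for all the relevant quantities. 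I expect the fixed-children case to need a short separate remark: a precolored neighbor is literally the $b=1$ specialization, and pulling the $\ell$ such factors out before applying the lemma to the remaining $d-\ell$ free children with cap $b=B(d)$ is what makes the two exponents $\ell$ and $d-\ell$ appear with the bases $w$ and $M(w)$ respectively.
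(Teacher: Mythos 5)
Your proposal is essentially correct, and the two parts take slightly different distances from the paper's own proof. For part (ii), your plan coincides with the paper's argument: both work with the quantities $\widetilde R_{T,v;i}=\prod_j\bigl(1-(1-w)p_{j,i}\bigr)$ (the paper writes them as $\mathbb{E}[w^{|\Phi_{N(v)}^{-1}(i)|}]$), apply part~(i) to each free child $v_j$ (whose degree in $T_{v_j}$ is at most $d$) to get the cap $p_{j,k}\le B(d)$, and then apply Lemma~\ref{lemma: Bernoulli opt} child-by-child with $b=B(d)$ to get $\prod_k(1-(1-w)p_{j,k})\ge M(w)$; you correctly identified the identity $M(w)=(1-(1-w)B(d))^{1/B(d)}$, and the precolored children contribute a factor $w$ to $\prod_k\widetilde R_{T,v;k}$ as you say. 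The one cosmetic difference is that the paper conditions on a second-neighborhood coloring $\tau_2$ and averages out at the end via total probability, whereas you apply part~(i) directly to the subtrees $T_{v_j}$ with the inherited boundary condition; both are valid, and your version is arguably a bit leaner. For part~(i) you depart from the paper more: the paper simply conditions on the coloring $\kappa$ of $N(v)$, gets the exact expression $\prob[\Phi(v)=1\mid\kappa]=w^{d_1}/\sum_i w^{d_i}$, applies AM--GM on the denominator, and averages over $\kappa$; you instead stay with the ratio recursion and want $\sum_{k\ne i}\widetilde R_{T,v;k}\ge (q-1)w^{d/(q-1)}\widetilde R_{T,v;i}$. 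Your sketch is hand-wavy at exactly this point (``on average over $k\ne i$'', ``the right convexity argument''), but the step is recoverable and I would encourage you to make it explicit: by AM--GM it suffices to show $\prod_{k\ne i}\widetilde R_{T,v;k}\ge w^d\widetilde R_{T,v;i}^{q-1}$, and since each coordinate $\widetilde R_{T,v;k}\le 1$ this reduces to $\prod_{k\in[q]}\widetilde R_{T,v;k}\ge w^d$, which is Lemma~\ref{lemma: Bernoulli opt} with $b=1$ applied child-by-child. So your route works, but the paper's conditioning argument for (i) is shorter and avoids the ``geometric mean over the other $q-1$ colors'' bookkeeping entirely; it is worth knowing both.
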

\begin{proof}[Proof of (i).]\let\qed\relax
In what follows, we will write $\prob_{T}$ instead of $\prob_{T;w}$.		
  We may assume that $i=1$.
		Fix any coloring $\kappa$ of the neighbors of $v$ compatible with $\tau$, and denote by $d_i$ the number of neighbors colored with color $i$, and thus $\sum_i d_i=d$.
		We have that 
		\begin{align}
			\prob_T[\Phi(v)=1\mid \kappa] = \frac{w^{d_1}}{w^{d_1}+\sum_{i\ge 2} w^{d_i}}.
		\end{align}
		The sum consists of $q-1$ terms. Using the AM-GM inequality, we get
		\begin{align*}
			\sum_{i\ge 2} w^{d_i} &\geq (q-1) \prod_{i\ge 2} w^{d_i/(q-1)} = (q-1) w^{\frac{1}{q-1}\sum_{i\ge 2} d_i } 
   \\
  & = (q-1)w^{ (d-d_1)/(q-1)}\geq (q-1)w^{ d/(q-1)}.
		\end{align*}
		Therefore, by the law of total probability,
		\begin{align*}
			\prob_T[\Phi(v)=1\mid \tau]&=\sum_{\kappa:N(v)\to [q]}\prob_T[\Phi(v)=1\mid\kappa]\prob_T[\kappa\mid \tau]\\
			& \leq \sum_{\kappa:N(v)\to [q]}\frac{w^{d_1}}{w^{d_1}+(q-1)w^{d/(q-1)}} \prob_T[\kappa\mid \tau]\\
			& = \frac{w^{d_1}}{w^{d_1}+(q-1)w^{d/(q-1)}}\le \frac{1}{1+(q-1)w^{d/(q-1)}},
		\end{align*}
  as desired.
\end{proof}
\begin{proof}[Proof of (ii)]
	    
Assume that the neighbors $v_1,\dots,v_\ell$ of $v$ are pre-colored with $\tau$ and the remaining neighbors $v_{\ell+1},\dots,v_d$ are not. 
Let $\tau_2$ be a partial coloring of the second neighbors of $v$ that respects the precoloring, $\tau$, of $T$, and denote by $\tau*\tau_2$ the partial coloring of $T$ formed by $\tau$ and $\tau_2$.

Then
\[
\prob_{{T}}[\Phi(v)=1\mid \tau*\tau_2]=\frac{Z^1_{{T},v}(w)}{Z^1_{{T},v}(w)+\sum_{j\ge 2}Z^j_{{T},v}(w)}=\frac{\frac{Z^1_{{T},v}(w)}{Z_{T-v}(w)}}{\frac{Z^1_{{T},v}(w)}{Z_{T-v}(w)}+\sum_{j\ge 2}\frac{Z^j_{{T},v}(w)}{Z_{T-v}(w)}}.
\]
Denote the connected components of $T-v$ by $T_1,\dots,T_d$. 

First, let us make some observations. 
Let $\Phi:V(T)\to[q]$ be a random coloring of $T$ compatible with $\tau*\tau_2$ with probability proportional to $w^{m(\Phi)}$. 
Denote by $\Phi_u$ the restriction of $\Phi$ to $\{u\}$ for $u\in V(T)$ and by $\Phi_{N(u)}$ the restriction of $\Phi$ to $N(u)$, the neighborhood of $u$.
Then for any $i\in[q]$ we have
\[
    \frac{Z^i_{T,v}(w)}{Z_{T-v}(w)}=\mathbb{E}[w^{|\Phi_{{N(v)}}^{-1}(i)|}]=w^{|\tau_{{N(v)}}^{-1}(i)|}\cdot \prod_{j=\ell+1}^d\mathbb{E}_{}[w^{|\Phi_{v_j}^{-1}(i)|}] \le 1.
\]
Also, for any $\ell+1\le j\le d$ we have
\begin{align*}
\mathbb{E}_{}[w^{|\Phi_{v_j}^{-1}(i)|}]&=\prob_{}[\Phi(v_j)\neq i]+w\prob_{}[\Phi(v_j)= i]\\
&=1-(1-w)\prob_{}[\Phi(v_j)=i].
\end{align*}
By applying Lemma~\ref{lemma: Bernoulli opt} with $x_i=\prob_{}[\Phi(v_j)=i]$, $\alpha=1-w$ and $b$ the bound on $\prob_{}[\Phi(v_j)=i]$ from (i), we obtain that
\begin{align*}
    \prod_{i=1}^{q}\mathbb{E}_{}[w^{|\Phi_{v_j}^{-1}(i)|}]&=\prod_{i=1}^q(1-(1-w)\prob_{}[\Phi_{v_j}=i])\\
    &\ge \left(1-\frac{1-w}{1+(q-1)w^{d/(q-1)}}\right)^{{1+(q-1)w^{d/(q-1)}}}\\
    &=M(w).
\end{align*}

Since the map $x\mapsto \frac{x}{x+A}$ is monotonically increasing  on $[0,1]$ if $A\ge 0$, we can bound ${\prob_T[\Phi(v)=1~|\tau*\tau_2]}$ as follows
\begin{align*}
        \prob[\Phi(v)=1|\tau*\tau_2]=&\frac{\mathbb{E}[w^{|\Phi^{-1}_{N(v)}(1)|}]}{\mathbb{E}[w^{|\Phi_{N(v)}^{-1}(1)|}]+\sum_{i\ge 2}\mathbb{E}[w^{|\Phi_{N(v)}^{-1}(i)|}]}\\
        \le&\frac{1}{1+\sum_{i\ge 2}\mathbb{E}[w^{|\Phi_{N(v)}^{-1}(i)|}]}\\
        \stackrel{AM-GM}{\le} &\frac{1}{1+(q-1)\prod_{i\ge 2}\left(\mathbb{E}[w^{|\Phi_{N(v)}^{-1}(i)|}]\right)^{1/(q-1)}}\\
        \le &\frac{1}{1+(q-1)\prod_{i=1}^q\left(\mathbb{E}[w^{|\Phi_{N(v)}^{-1}(i)|}]\right)^{1/(q-1)}}\\
        =&\frac{1}{1+(q-1)\prod_{i=1}^q\left(w^{|\tau_{N(v)}^{-1}(i)|}\cdot \prod_{j=\ell+1}^d\mathbb{E}[w^{|\Phi_{v_j}^{-1}(i)|}]\right)^{1/(q-1)}}\\
        =&\frac{1}{1+(q-1) w^{\ell/(q-1)}\left(\prod_{j=\ell+1}^d\prod_{i=1}^q\mathbb{E}[w^{|\Phi_{v_j}^{-1}(i)|}]\right)^{1/(q-1)}}\\
        \le &\frac{1}{1+(q-1)w^{\ell/(q-1)}M(w)^{(d-\ell)/(q-1)}}.
\end{align*}

To finish the proof, we use the law of total probability, similarly as in the proof of (i).
\begin{align*}
    \prob[\Phi(v)=1\mid \tau]&=\sum_{\tau_2}\prob[\Phi(v)=1~|~\tau*\tau_2]\prob[\tau*\tau_2]\\
    &\le \sum_{\tau_2}\prob[\tau*\tau_2] \frac{1}{1+(q-1)w^{\ell/(q-1)}M(w)^{(d-\ell)/(q-1)}}\\
    &=\frac{1}{1+(q-1)w^{\ell/(q-1)}M(w)^{(d-\ell)/(q-1)}},
\end{align*}
where the summation over $\tau_2$ is over all colorings of the second neighborhood of $v$ that are compatible with $\tau$. 
\end{proof}

\subsection{Further useful bounds}
In this subsection, we will assume that $w=1-\alpha\frac{q}{d+1}$ in order to obtain more amenable bounds on certain quantities that appeared in the marginal probability bounds of the previous subsection.

We will need the following lemma.
\begin{lemma}
	\label{lemma: power -1/x exp bound}
	If $x \in (0,1)$, then
	\[
	(1 -x)^{-1/x} \leq e \cdot \frac{1-x/2}{1-x}.
	\]
\end{lemma}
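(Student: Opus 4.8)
The plan is to take logarithms and reduce to a one-variable inequality that can be checked by comparing Taylor series. Setting $x \in (0,1)$, the claimed inequality is equivalent (after taking $\log$, which is monotone) to
\[
-\frac{1}{x}\log(1-x) \leq 1 + \log(1-x/2) - \log(1-x).
\]
Multiplying through by $x>0$ and rearranging, this becomes
\[
-\log(1-x) \leq x + x\log(1-x/2) - x\log(1-x),
\]
i.e. $g(x) := x + x\log(1-x/2) - x\log(1-x) + \log(1-x) \geq 0$ for all $x \in (0,1)$. The first step is to write $g$ in this form and note $g(0) = 0$, so it suffices to show $g$ is nondecreasing on $(0,1)$, or alternatively to expand everything as a power series in $x$ and verify all coefficients (from some point on, or all of them) are nonnegative.

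The cleanest route I would take is the power-series comparison. Using $-\log(1-x) = \sum_{n\geq 1} x^n/n$ and $-\log(1-x/2) = \sum_{n\geq 1} x^n/(n2^n)$, expand each term of $g(x)$ as a series. The term $\log(1-x) = -\sum_{n\geq 1} x^n/n$ contributes $-x - x^2/2 - \cdots$; the term $x + x\log(1-x/2) - x\log(1-x) = x - \sum_{n\geq 1} x^{n+1}/(n2^n) + \sum_{n\geq 1} x^{n+1}/n$. Collecting the coefficient of $x^n$ for $n \geq 2$, one gets something of the shape $\frac{1}{n-1} - \frac{1}{(n-1)2^{n-1}} - \frac{1}{n}$, and the coefficient of $x$ cancels ($1 - 1 = 0$). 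Since $\frac{1}{n-1} - \frac{1}{n} = \frac{1}{n(n-1)} > 0$, I would just need $\frac{1}{n(n-1)} \geq \frac{1}{(n-1)2^{n-1}}$, i.e. $2^{n-1} \geq n$, which holds for all $n \geq 1$. Hence every coefficient of $g$ is nonnegative and $g(x) \geq 0$ on $[0,1)$, giving the lemma.

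The main obstacle — really the only thing requiring care — is getting the bookkeeping of the series expansion exactly right, in particular correctly shifting indices when multiplying the logarithmic series by $x$ and making sure the linear terms cancel so that the lemma is an equality at $x = 0$ rather than failing near $0$. As an alternative to the series argument (useful as a sanity check, or if one prefers a calculus proof), one can instead show $g'(x) \geq 0$ directly: differentiating gives $g'(x) = \log(1-x/2) - \log(1-x) + x\left(\frac{-1/2}{1-x/2} + \frac{1}{1-x}\right) + \frac{-1}{1-x} + 1$, and after combining terms over a common denominator this reduces to showing a manifestly positive rational expression dominates $\log\frac{1-x/2}{1-x}$, which again follows from elementary bounds on $\log$. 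I would present the series version as the main proof since it is shortest and leaves no case analysis.
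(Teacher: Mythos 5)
Your proposal is correct and takes essentially the same route as the paper: take logarithms, expand both sides (equivalently, the difference) as power series, and observe that the term-by-term comparison reduces to $2^{i}\ge i+1$ for $i\ge 1$. The only cosmetic difference is that you multiply through by $x$ before expanding, which merely shifts the index (your condition $2^{n-1}\ge n$ is the paper's $2^i\ge i+1$ with $i=n-1$).
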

\begin{proof}
Since the logarithm is increasing, it suffices to prove that 
\begin{equation}\label{eq:bound after log}
    -\frac{1}{x}\log(1-x)\le 1+\log\left(\frac{1-x/2}{1-x}\right).
\end{equation}
Let us compare the coefficients of the Taylor series of the left and right hand side of Equation~\eqref{eq:bound after log}. 
For the LHS of~\eqref{eq:bound after log}, we know that
\[
-\frac{1}{x}\log(1-x)=\sum_{i\ge 0}\frac{1}{i+1}x^i=1+\sum_{i\ge 1}\frac{1}{i+1}x^i.
\]
While for the RHS of~\eqref{eq:bound after log}, we have
\begin{align*}
1+\log\left(\frac{1-x/2}{1-x}\right)&=1+\log(1-x/2)-\log(1-x)\\
    &=1-\sum_{i\ge 1}\frac{1}{i2^i}x^i+\sum_{i\ge 1}\frac{1}{i}x^i\\
    &=1+\sum_{i\ge 1}\frac{1}{i}\left(1-\frac{1}{2^i}\right)x^i.
\end{align*}
It is thus sufficient to prove that for any $i\ge 1$ we have 
\[
    \frac{1}{i+1}\le \frac{1}{i}\left(1-\frac{1}{2^i}\right), 
\]
or equivalently
\[
    i+1\le 2^i.
\]
This is true for any $i\ge 1$, since $t\mapsto 2^t$ is strictly convex, and the graphs of $t\mapsto 2^t$ and $t\mapsto t+1$ intersect at $t=0$ and $t=1$.
\end{proof}

\begin{lemma}\label{lemma:usefull bound}
Let $q\ge 3$, $d\geq q+1$ and $\alpha\in(0,1]$. 
Then for $w=1-\alpha\frac{q}{d+1}$ and $a=(d+1)/q$ we have
\begin{enumerate}[label =(\roman*)]
\item 
\[
    w^{-(d+1)/q}\le e\cdot \frac{a-1/2}{a-1},
\]
\item and
\[
    M(w)^{- \frac{d+1}{q}} \leq e \left(1+\frac{e/2}{d+1}\frac{1}{\tfrac{a-1}{a-1/2}-\tfrac{e}{d+1}}\right).
\]
\end{enumerate}
\end{lemma}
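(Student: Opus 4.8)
For part (i), the plan is to apply Lemma~\ref{lemma: power -1/x exp bound} directly with $x = 1-w = \alpha\frac{q}{d+1} = \alpha/a \in (0,1)$, since $\alpha\in(0,1]$ and $a=(d+1)/q>1$ (we have $d\geq q+1$). That lemma gives $(1-x)^{-1/x}\le e\cdot\frac{1-x/2}{1-x}$, i.e. $w^{-1/(1-w)}\le e\cdot\frac{1-(1-w)/2}{w}$. However, the exponent I actually want is $-(d+1)/q = -1/x \cdot (x\cdot(d+1)/q) = -\alpha^{-1}\cdot\frac{1}{1-w}\cdot(1-w) $... more precisely $(d+1)/q = a$ and $1/(1-w) = a/\alpha$, so $(d+1)/q = \alpha/(1-w)\le 1/(1-w)$. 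Since $w\in(0,1)$, raising to a smaller positive power only decreases $w^{-\text{power}}$, so $w^{-(d+1)/q}\le w^{-1/(1-w)}\le e\cdot\frac{1-(1-w)/2}{w}$. Finally I rewrite the right-hand side in terms of $a$: with $1-w=\alpha/a$ we get $e\cdot\frac{1-\alpha/(2a)}{1-\alpha/a} = e\cdot\frac{2a-\alpha}{2a-2\alpha} = e\cdot\frac{2a-\alpha}{2(a-\alpha)}$, and one checks this is increasing in $\alpha$ on $(0,1]$ (the map $\alpha\mapsto \frac{2a-\alpha}{a-\alpha}$ has positive derivative since $a>0$), so plugging $\alpha=1$ yields the claimed bound $e\cdot\frac{a-1/2}{a-1}$. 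I should double-check the monotonicity-in-$\alpha$ step carefully, as that is where the only real (if minor) subtlety lies.

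For part (ii), recall $M(w)=\bigl(1-\frac{1-w}{1+(q-1)w^{d/(q-1)}}\bigr)^{1+(q-1)w^{d/(q-1)}}$. Write $P := 1+(q-1)w^{d/(q-1)}$, so $M(w) = (1-\frac{1-w}{P})^{P}$. By the Bernoulli-type inequality (as used right after the definition of $M(w)$: $(1-y)^{1/y}$ is increasing-type bound, or directly $(1-z/P)^P$ decreasing in $P$) and the fact that $P\ge 1$, I have $M(w)\ge 1-(1-w)=w$, but I need a quantitative lower bound on $M(w)$. The cleaner route: apply Lemma~\ref{lemma: power -1/x exp bound} in the reverse direction, since $M(w) = \bigl((1-z)^{-1/z}\bigr)^{-zP}$ with $z=\frac{1-w}{P}$, giving $M(w) = \bigl((1-z)^{-1/z}\bigr)^{-(1-w)} \ge \bigl(e\cdot\frac{1-z/2}{1-z}\bigr)^{-(1-w)}$. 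Then $M(w)^{-(d+1)/q} = M(w)^{-a} \le \bigl(e\cdot\frac{1-z/2}{1-z}\bigr)^{(1-w)a} = \bigl(e\cdot\frac{1-z/2}{1-z}\bigr)^{\alpha} \le e\cdot\frac{1-z/2}{1-z}$ (using $\alpha\le 1$ and the base $\ge 1$). It remains to bound $\frac{1-z/2}{1-z}=1+\frac{z/2}{1-z}$ where $z=\frac{1-w}{P}$. Here I use $1-w=\alpha q/(d+1)\le q/(d+1)$ and $P = 1+(q-1)w^{d/(q-1)}\ge (q-1)w^{d/(q-1)}$; combined with part (i)'s bound $w^{-(d+1)/q}\le e\frac{a-1/2}{a-1}$ (hence $w^{d/(q-1)}\ge$ something controllable — note $d/(q-1)$ versus $(d+1)/q$ needs care, $d/(q-1)\ge (d+1)/q$ iff $dq\ge (d+1)(q-1)=dq-d+q-1$ iff $d\ge q-1$, true), I get $w^{d/(q-1)}\ge w^{(d+1)/q}\ge \bigl(e\frac{a-1/2}{a-1}\bigr)^{-1} = \frac1e\cdot\frac{a-1}{a-1/2}$. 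So $P\ge (q-1)\cdot\frac1e\cdot\frac{a-1}{a-1/2}$, and then $z\le \frac{q/(d+1)}{(q-1)/e\cdot(a-1)/(a-1/2)}$. Assembling: $\frac{z/2}{1-z}\le \frac{z/2}{1-z}$, and plugging the bound on $z$ (together with a crude $1-z\ge$ something, or absorbing it) should reproduce $\frac{e/2}{d+1}\cdot\frac{1}{\frac{a-1}{a-1/2}-\frac{e}{d+1}}$; the algebra here is where I expect to spend the most effort, and the main obstacle is tracking the $q$ versus $q-1$ and $d$ versus $d+1$ discrepancies tightly enough that the stated clean form comes out — I would verify at each step that the inequalities go in the right direction and that the hypotheses $q\ge 3$, $d\ge q+1$ are exactly what is needed to make the exponent comparisons ($d/(q-1)\ge (d+1)/q$ etc.) valid.

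Overall the structure is: (i) is an almost immediate corollary of Lemma~\ref{lemma: power -1/x exp bound} plus a one-line monotonicity check in $\alpha$; (ii) is the same lemma applied in the opposite direction to the inner base of $M(w)$, followed by substituting the lower bound on $P$ coming from (i). I would present (i) first in full, then (ii), flagging that the only nontrivial inequalities are the exponent comparisons $\frac{d+1}{q}\le\frac{1}{1-w}$ and $\frac{d}{q-1}\ge\frac{d+1}{q}$, both of which follow from $d\ge q$ and $\alpha\le1$.
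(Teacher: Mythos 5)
Your part (i) is correct and uses the same key lemma as the paper, via a slightly different route (you apply Lemma~\ref{lemma: power -1/x exp bound} with $x=1-w$ and then use monotonicity in $\alpha$, whereas the paper first bounds $\alpha\le 1$ to replace $w$ by $1-q/(d+1)$ and then applies the lemma with $x=q/(d+1)$; both are fine). Your setup for part (ii) is also sound up to the point where you need a lower bound on $z=(1-w)/P$'s denominator $P=1+(q-1)w^{d/(q-1)}$.

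However, part (ii) contains a genuine error at exactly the step you flagged as needing care. You correctly check that $d/(q-1)\ge (d+1)/q$ for $d\ge q-1$, but then conclude $w^{d/(q-1)}\ge w^{(d+1)/q}$. This is backwards: since $0<w<1$, a \emph{larger} exponent gives a \emph{smaller} power, so $d/(q-1)\ge(d+1)/q$ yields $w^{d/(q-1)}\le w^{(d+1)/q}$. Consequently, dropping the $1$ from $P$ and using $P\ge (q-1)w^{d/(q-1)}$ does not produce a lower bound on $P$ in terms of $w^{(d+1)/q}$ and part (i); the chain of inequalities goes the wrong way. (Even if the sign were somehow salvageable, your lower bound would carry an extra factor $q/(q-1)>1$ that would prevent the stated clean form from coming out, which you also anticipated as a concern.) The paper avoids both problems by not dropping the $1$ but instead applying AM--GM to the $q$ terms $1,w^{d/(q-1)},\dots,w^{d/(q-1)}$ to get $1+(q-1)w^{d/(q-1)}\ge q\,w^{d/q}$, and then using $w^{d/q}\ge w^{(d+1)/q}$ (which \emph{is} in the right direction since $d/q\le (d+1)/q$). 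This gives $P\ge q\,w^{(d+1)/q}$, hence $z\le \frac{1}{(d+1)w^{(d+1)/q}}\le \frac{e}{d+1}\cdot\frac{a-1/2}{a-1}$, after which the claimed bound follows from $\frac{1-z/2}{1-z}=1+\frac{z/2}{1-z}$ being increasing in $z$ together with the observation that the bound on $z$ is strictly less than $1$.
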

\begin{proof}
For the first part, we rewrite the left hand side and apply Lemma~\ref{lemma: power -1/x exp bound}, obtaining
\begin{align*}
    w^{-(d+1)/q}=\left(1-\alpha\frac{q}{d+1}\right)^{-(d+1)/q}\le \left(1-\frac{q}{d+1}\right)^{-(d+1)/q}
    \le e\cdot \frac{1-\tfrac{q}{2(d+1)}}{1-\tfrac{q}{d+1}}=e\cdot \frac{a-1/2}{a-1},
\end{align*}
as desired.

To prove the second part, we first bound $M(w)^{-(d+1)/q}$.
We have
\begin{align*}
        M(w)^{ -\frac{d+1}{q}}  & = \left( \left(1-\frac{1-w}{1+(q-1)w^{d/(q-1)}}\right)^{{1+(q-1)w^{d/(q-1)}}} \right)^{ -\frac{d+1}{q}}\\
        & \leq \left( \left(1-  \frac{q}{d+1} \cdot \frac{1}{1+(q-1)w^{d/(q-1)}}\right)^{{1+(q-1)w^{d/(q-1)}}} \right)^{ -\frac{d+1}{q}}.
\end{align*}
The last expression is again of the form $(1-x)^{-1/x}$, where $x = \frac{q/(d+1)}{1+(q-1)w^{d/(q-1)}} \in (0,1)$.
We can apply Lemma~\ref{lemma: power -1/x exp bound} to find that
\begin{align}\label{eq:bound on M(w) 1}
    M(w)^{- \frac{d+1}{q}} \leq e  \frac{1-x/2}{1-x}=e\left(1+\frac{x/2}{1-x}\right).
\end{align}
Note that by AM-GM and part (i) we have
\begin{align*}
x=\frac{q}{d+1} \cdot \frac{1}{1+(q-1)w^{d/(q-1)}}\leq \frac{q}{d+1} \cdot  \frac{1}{qw^{d/q}} \leq \frac{1}{(d+1)w^{(d+1)/q}}
\leq \frac{e}{d+1}\frac{a-1/2}{a-1}.
\end{align*}
Next, we note that $\frac{e}{d+1}\frac{a-1/2}{a-1}\leq 7 e/20<1$, which follows since $d+1\geq q+2$ and $q \ge 3$.
Using that $x\mapsto \tfrac{1-x/2}{1-x}$ is increasing for $x\leq 1$ and plugging this in into~\eqref{eq:bound on M(w) 1}
we obtain
\begin{align}\label{eq:bound on M(w) 2}
    M(w)^{- \frac{d+1}{q}} \leq e \left(1+\frac{e/2}{d+1}\frac{1}{\tfrac{a-1}{a-1/2}-\tfrac{e}{d+1}}\right),
\end{align}
as desired.
%
\end{proof}
The lemma allows us to prove convenient bounds on the quantities $B(0)$ and $B(d).$ Let us recall that for $a>1$ we have
\begin{equation*}
    K(a)=e\cdot \frac{a-1/2}{a-1}.
\end{equation*}
\begin{corollary}\label{cor: bounds on B(d)}
Let $q\geq 3$ be an integer and let $d\geq q+1$. Define $a>1$ by $aq=d+1$.
\begin{enumerate}
    \item[(i)] Then $B(d)\leq K(a)/q$.
\item[(ii)] For any integers $q\geq 3$ and $d\geq q+2$ there exists a constant $K\leq \min\{ q,13\}$ such that $B(0)\leq K/q$.
Moreover, $K=e+O(1/d)$.
\end{enumerate}
\end{corollary}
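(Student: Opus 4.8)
The plan is to prove both parts by the same two-step argument, combining the AM--GM inequality with Lemma~\ref{lemma:usefull bound}. Recall from \eqref{eq: def B} that $B(d)=(1+(q-1)w^{d/(q-1)})^{-1}$ and $B(0)=(1+(q-1)M(w)^{d/(q-1)})^{-1}$, and that throughout this subsection $w=1-\alpha\tfrac{q}{d+1}$ with $\alpha\in(0,1]$. For part (i), I would apply AM--GM to the $q$ numbers consisting of $1$ together with the $q-1$ equal terms $w^{d/(q-1)}$, giving $1+(q-1)w^{d/(q-1)}\ge q(w^{d})^{1/q}=qw^{d/q}$, hence $B(d)\le \tfrac1q w^{-d/q}$. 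Since $0<w<1$ we have $w^{-d/q}\le w^{-(d+1)/q}$, and Lemma~\ref{lemma:usefull bound}(i) (applicable since $d\ge q+1$, $q\ge 3$) bounds $w^{-(d+1)/q}$ by $e\cdot\tfrac{a-1/2}{a-1}=K(a)$, where $a=(d+1)/q$. This yields $B(d)\le K(a)/q$, proving (i).

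For part (ii) I would run the identical AM--GM step on $B(0)$ to obtain $B(0)\le\tfrac1q M(w)^{-d/q}$, then use $0<M(w)<1$ (immediate from the definition of $M(w)$ in \eqref{eq: def B}, or from $w<M(w)<1$ noted thereafter) to pass to $M(w)^{-d/q}\le M(w)^{-(d+1)/q}$, and finally invoke Lemma~\ref{lemma:usefull bound}(ii) to conclude
\[
B(0)\le \frac1q\, M(w)^{-(d+1)/q}\le \frac{K_0}{q},\qquad K_0:=e\left(1+\frac{e/2}{d+1}\cdot\frac{1}{\tfrac{a-1}{a-1/2}-\tfrac{e}{d+1}}\right).
\]
It then remains to verify the two assertions about $K_0$. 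For $K_0\le 13$, the key is monotonicity: the correction term $\tfrac{e/2}{d+1}\bigl(\tfrac{a-1}{a-1/2}-\tfrac{e}{d+1}\bigr)^{-1}$ is decreasing in $d$ for fixed $q$, because $\tfrac{e/2}{d+1}$ decreases while $\tfrac{a-1}{a-1/2}=1-\tfrac{1/2}{a-1/2}$ increases (as $a=(d+1)/q$ increases) and $\tfrac{e}{d+1}$ decreases, so the denominator grows; a short computation (writing $n=d+1$ and differentiating) shows it is likewise decreasing in $q$ along the extremal line $d=q+2$. Hence $K_0$ is maximal at $(q,d)=(3,5)$, where $a=2$ and the correction equals $\tfrac{e}{2(4-e)}$, so $K_0=e\bigl(1+\tfrac{e}{2(4-e)}\bigr)<13$; thus $K_0\le 13$ throughout $q\ge 3$, $d\ge q+2$. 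For $K_0=e+O(1/d)$ I would simply note that the denominator $\tfrac{a-1}{a-1/2}-\tfrac{e}{d+1}\to1$ as $d\to\infty$, so the correction term is $O(1/d)$. Finally, since $B(0)<1=q/q$ always holds, setting $K:=\min\{K_0,q\}$ gives $K\le\min\{q,13\}$ and $B(0)\le K/q$; and since $K_0\to e<q$ as $d\to\infty$ with $q$ fixed, eventually $K=K_0$, so $K=e+O(1/d)$ as well.

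The only genuine obstacle is the numerical verification $K_0\le 13$ in part (ii); everything else is a direct application of AM--GM and of the already-established Lemma~\ref{lemma:usefull bound}. Even that verification is routine once one isolates the monotonicity of the correction term in $d$ and in $q$ and evaluates at the corner $(q,d)=(3,5)$, so I expect the proof to be short.
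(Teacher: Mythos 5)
Your proposal is correct and follows essentially the same route as the paper: both parts are proved by an AM--GM step to get $B(d)\le \tfrac1q w^{-d/q}$ and $B(0)\le \tfrac1q M(w)^{-d/q}$, followed by an appeal to Lemma~\ref{lemma:usefull bound}. The one place you diverge is the numerical check that $K_0\le 13$: the paper first loosens the bound to $e\left(1+\tfrac{e/2}{d+1}\cdot\tfrac{d+1-q/2}{d+1-q-e}\right)$ (dropping a denominator term), which makes the monotonicity in $d+1$ of each factor immediate, and then substitutes $d+1=q+3$, $q=3$; you instead keep the exact expression and argue monotonicity in $d$ directly and then in $q$ along the boundary $d=q+2$, evaluating exactly at $(q,d)=(3,5)$ to get $K_0=e\bigl(1+\tfrac{e}{2(4-e)}\bigr)\approx 5.6<13$. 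Your variant gives a sharper intermediate constant at the cost of a slightly more involved monotonicity claim (the monotonicity in $q$ of the exact correction term along $d=q+2$), which you assert without writing out; it does hold (one can check $\tfrac{a-1}{a-1/2}=\tfrac{6}{q+6}$ when $d+1=q+3$, reducing the correction to $\tfrac{e(q+6)}{2\left((6-e)q+18-6e\right)}$, whose derivative in $q$ is negative), but you should include that computation explicitly rather than only gesture at it. With that detail filled in, your argument is complete and matches the paper in spirit.
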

\begin{proof}
  Part (i) is a direct corollary from Lemma~\ref{lemma:usefull bound}, as
  \[
    B(d)=\frac{1}{1+(q-1)w^{d/(q-1)}}\le \frac{1}{q}w^{-d/q}\le \frac{1}{q}w^{-(d+1)/q}\le \frac{K(a)}{q}.
  \]

  To prove part (ii), first observe that $B(0)<1$, thus $K\le q$. On the other hand,
   we see that for fixed $q$ we have by part (ii) of the previous lemma,
  \begin{align*}
  B(0) &= \frac{1}{1+(q-1)M(w)^{d/(q-1)}}  \le  \frac{1}{q}M(w)^{-(d+1)/q}\\
  &\leq \frac{e}{q} \left(1+\frac{e/2}{d+1}\frac{1}{\tfrac{a-1}{a-1/2}-\tfrac{e}{d+1}}\right)\\
  & =  \frac{e}{q} \left(1+\frac{e/2}{d+1} \frac{(d+1)(d+1- q/2)}{(d+1)( d+1 -q) - e(d+1-q/2)} \right)\\
  & <   \frac{e}{q} \left(1+\frac{e/2}{d+1} \frac{d+1- q/2} { d+1 -q - e} \right).
  \end{align*}
Notice that both $\frac{d+1- q/2} { d+1 -q - e}$ and $\tfrac{e/2}{d+1}$ are decreasing in $d+1$ and positive since $d+1\ge q+3$.
Therefore we can choose
\begin{align*}
K=e\left(1+\frac{e/2}{d+1} \frac{d+1- q/2} { d+1 -q - e}\right) \leq e\left(1+\frac{e/2}{q+3} \frac{q/2+3} {3 - e}\right)\leq e\left(1+\frac{e/2}{6} \frac{9/2} {3 - e}\right)\leq 12.6<13,
\end{align*}
and $K=e+O(1/d).$
 \end{proof}

\subsection{Bounding the local weights}
We are now in a position to start proving the promised bounds on the $\lambda_v$.
Let us briefly recall the setup and notation.
Let $q,d\in \mathbb{N}_{\geq 2}$ and $w>0$.
Let $(T,v)\in \mathcal{T}_{d+1}$ be a rooted tree and let $\Lambda\subset V(T)\setminus \{v\}$. 
Let $\tau, \tau'$ be two boundary conditions on $\Lambda$.
To increase readability, in what follows we will mostly omit the dependence on $w$ of the square root ratios and functions derived from these.
We denote the square root ratio of $v$ in $T$ for $\tau$ (resp. $\tau'$) by $X_v=X_v(w)$ (resp. $X'_v=X'_v(w)$). Recall that $Y_v(t)=tX_v+(1-t)X'_v$ and $\lambda_v=\max_{i\in[q],t\in[0,1]}\tfrac{\sqrt{S(Y_v(t))}}{S_i\left(  Y_v(t)  \right)}$,
where \[S_i(Y_v(t))=S(Y_v(t))+(w-1)Y_v(w;t)^2_i=\sum_{k=1}^q Y_v(t)_k^2+(w-1)Y_v(t)^2_i.\]
Denote by $v_1,\ldots,v_{d'}$ the neighbors of $v$ (and note that $d'\leq d$).
For each $j=1,\ldots,d'$, let $X_{v_j}$ (resp. $X'_{v_j}$) be the square root ratios of $v_j$ in the component of $T-v$ containing $v_j$ for the boundary condition $\tau$ restricted to this component (resp. for $\tau'$). 
For the remainder of this section let $t$ be an optimizer appearing in the definition of $\lambda_v$~\eqref{eq:def weight} and let $S(t)\coloneqq S(Y_v(t))$.
Recall that $\freen$ denotes the index set of the free neighbors of $v$.

\begin{lemma}
Assume, with the notation established above, that $\dist(\Delta_{\tau,\tau'},v)>1$. Then for any $t\in [0,1]$,
    \[
        S(t)\ge qw^{\frac{d-|\freen|}{q}}\prod_{j\in\freen}(1-(1-w)B(\ell_j))^{\frac{1}{qB(\ell_j)}},
    \]
    where $\ell_j$ is the number of fixed children of $v_j$.
    In particular,
    \[
       \frac{1}{S(t)}\le \frac{1}{q}w^{-\frac{d}{q}} \qquad\textrm{and} \qquad  \frac{1}{S(t)}\le \frac{1}{q}w^{-\frac{d-|\freen|}{q}}(1-(1-w)B(d))^{-\frac{|\freen|}{qB(d)}}.
    \]
    \label{lemma: bound on S_n}
\end{lemma}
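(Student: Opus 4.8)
The plan is to lower bound $S = S(Z_v(t)) = \sum_{k=1}^q Z_v(t)_k^2$ by relating each coordinate $Z_v(t)_k = t X_{v,k} + (1-t) Y_{v,k}$ to the square root ratios at the neighbors. First I would recall from the recursion~\eqref{eq: recursion} that $X_{v,i} = \prod_{j=1}^{d'} F(X_{v_j})_i$ and $Y_{v,i} = \prod_{j=1}^{d'} F(Y_{v_j})_i$, where $F(x)_i = \sqrt{1+(w-1)x_i^2/S(x)} = \sqrt{1-(1-w)\prob[\cdot]}$. Since each factor $F(\cdot)_i$ lies in $[\sqrt{w},1]$, both $X_{v,i}$ and $Y_{v,i}$ — and hence the convex combination $Z_v(t)_i$ — are at least $\prod_j (\text{lower bound on } F(\cdot)_i)$. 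The key point is that a fixed child $v_k$ (there are $d - |\freen|$ of them) contributes a factor which is either $1$ (if its color differs from $i$) or $\sqrt{w}$ (if it equals $i$); summing $Z_v(t)_i^2$ over $i$ and using that the fixed colors are distributed among the $q$ colors, one extracts a global factor of roughly $q \cdot w^{(d-|\freen|)/q}$ after an AM–GM / convexity step on the distribution of fixed colors across coordinates.

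Next I would handle the free children. For a free child $v_j$, the factor $F(Z_{v_j})_i = \sqrt{1-(1-w)\prob_{T_{v_j}}[\Phi(v_j)=i]}$, and by Lemma~\ref{lem:prob basic}(ii) — using the hypothesis $\dist(\Delta_{\tau,\tau'},v)>1$, which forces the children of $v_j$ to be free or to have the assumption that $v_j$ has the appropriate bounded degree so that the two-step bound applies with $\ell_j$ fixed grandchildren — we have $\prob_{T_{v_j}}[\Phi(v_j)=i] \le B(\ell_j)$. Thus each free-child factor is at least $\sqrt{1-(1-w)B(\ell_j)}$. Combining across the $q$ coordinates $i$ and the free children $j\in\freen$, applying Lemma~\ref{lemma: Bernoulli opt} (Bernoulli optimization) with $b = B(\ell_j)$ to convert the product over colors $\prod_{i=1}^q(1-(1-w)x_i)$ into $(1-(1-w)B(\ell_j))^{1/B(\ell_j)}$, and then dividing the exponent by $q$ since we need the per-coordinate contribution to $S = \sum_i Z_v(t)_i^2$, yields the factor $\prod_{j\in\freen}(1-(1-w)B(\ell_j))^{1/(qB(\ell_j))}$. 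Putting the fixed-child and free-child contributions together gives the stated bound
\[
S \ge q w^{\frac{d-|\freen|}{q}} \prod_{j\in\freen}(1-(1-w)B(\ell_j))^{\frac{1}{qB(\ell_j)}}.
\]

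For the two "in particular" consequences: the first, $1/S \le \frac1q w^{-d/q}$, follows by simply bounding each free-child factor $1-(1-w)B(\ell_j) \ge w$ (since $B(\ell_j)\le 1$) and each such factor's exponent appropriately, collapsing everything to $S \ge q w^{d/q}$. The second follows from the monotonicity $B(\ell_j)\le B(d)$ together with the fact that $x\mapsto (1-(1-w)x)^{1/x}$ is monotone in the relevant range, so each free-child factor is at least $(1-(1-w)B(d))^{1/B(d)}$, giving $S \ge q w^{(d-|\freen|)/q}(1-(1-w)B(d))^{|\freen|/(qB(d))}$ and hence the claimed reciprocal bound.

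The main obstacle I anticipate is the bookkeeping in the fixed-child contribution: one must argue carefully that summing $\prod_{\text{fixed } k} F(\cdot)_i^2$ over $i\in[q]$ — where the exponent of $w$ in coordinate $i$ is the number of fixed children colored $i$ — is minimized (up to the factor $q$) when those fixed colors are spread as evenly as possible, which is where a convexity/AM–GM argument on $\sum_i w^{(\text{count}_i)}$ is needed, exactly paralleling the computation in the proof of Lemma~\ref{lem:prob basic}(i). A secondary subtlety is making sure the hypothesis $\dist(\Delta_{\tau,\tau'},v)>1$ is genuinely what licenses the use of the two-step bound $\prob \le B(\ell_j)$ at each free child (it guarantees $X_{v_j}=Y_{v_j}$ is not needed, but rather that the relevant marginal bound from Lemma~\ref{lem:prob basic} applies uniformly), and that the degree constraints from $\mathcal{T}_{d+1}$ are compatible with "$v_j$ has at most $d+1$ total degree / $d$ children" as required there.
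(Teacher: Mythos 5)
Your proposal has the right ingredients (AM--GM on the sum, the two-step marginal bound $B(\ell_j)$, and the Bernoulli-optimization lemma applied to a product over colors), and the paper's proof does use all of these, but the order in which you apply them contains a genuine gap.

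The problem is the interjected per-coordinate bound: you write ``each free-child factor is at least $\sqrt{1-(1-w)B(\ell_j)}$.'' If you lower bound each $F(\cdot)_i$ coordinate-wise in this way \emph{before} multiplying over $i$, you have thrown away the constraint $\sum_i \prob[\Phi(v_j)=i]=1$, so Lemma~\ref{lemma: Bernoulli opt} is no longer applicable; what remains is only $\prod_i(\text{lower bound})^2 \ge (1-(1-w)B(\ell_j))^q$, which after the $1/q$-th power gives the per-child factor $1-(1-w)B(\ell_j)$, strictly weaker (since $B(\ell_j)>1/q$) than the stated $(1-(1-w)B(\ell_j))^{1/(qB(\ell_j))}$, and not strong enough for the downstream Proposition~\ref{prop: bound lambda_v2}. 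Relatedly, lower bounding the convex combination $Z_v(t)_i$ by a \emph{common} per-coordinate bound such as $\min\{X_{v,i},Y_{v,i}\}$ does not let you factor $\prod_i Z_v(t)_i^2$ into a product over children, because the minimum can be attained at different boundary conditions for different colors $i$.

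The missing step is the weighted AM--GM on the convex combination itself:
\[
Z_v(t)_i = t X_{v,i}+(1-t)Y_{v,i} \ \ge\ X_{v,i}^{\,t}\,Y_{v,i}^{\,1-t},
\]
which after the outer AM--GM gives
\[
S \ \ge\ q\Big(\prod_{i=1}^q X_{v,i}^2\Big)^{t/q}\Big(\prod_{i=1}^q Y_{v,i}^2\Big)^{(1-t)/q}.
\]
Now each of $\prod_i X_{v,i}^2$ and $\prod_i Y_{v,i}^2$ factors over children as $\prod_j\prod_i F(\cdot)_i^2$; a fixed child contributes exactly $w$, and for a free child one applies Lemma~\ref{lemma: Bernoulli opt} to $\prod_i(1-(1-w)p_i)$ with $p_i=\prob_{T_{v_j}}[\Phi(v_j)=i\mid\eta]$ (where $\eta\in\{\tau,\tau'\}$), $\sum_i p_i=1$, $p_i\le B(\ell_j)$, yielding the required $(1-(1-w)B(\ell_j))^{1/B(\ell_j)}$. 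The hypothesis $\dist(\Delta_{\tau,\tau'},v)>1$ is used precisely to guarantee that $\ell_j$ is the same under $\tau$ and $\tau'$, so the Bernoulli-optimization bound for $X_{v_j}$ and for $Y_{v_j}$ share a common $b=B(\ell_j)$. One more notational point: $F(Z_{v_j})_i$ is not the right object here --- since $F$ is nonlinear, $F(Z_{v_j}(t))_i\neq\sqrt{1-(1-w)\prob[\Phi(v_j)=i]}$; you should work with $F(X_{v_j})_i$ and $F(Y_{v_j})_i$ separately and only combine them via the weighted AM--GM above. Your treatment of the two ``in particular'' consequences (replacing $1-(1-w)B(\ell_j)\ge w^{B(\ell_j)}$, and the monotonicity of $x\mapsto(1-(1-w)x)^{1/x}$ together with $B(\ell_j)\le B(d)$) is fine once the main bound is fixed.
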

\begin{proof}    
    We compute
    \begin{align*}
        S(t) = \sum_{i=1}^q Y_v(t)_{i}^2 \stackrel{\text{AM-GM}}{\geq} q \prod_{i=1}^q (Y_v(t)_{i}^2)^{1/q}.
    \end{align*}
    We apply the weighted AM-GM inequality and obtain the lower bound
    \begin{equation}
        q \prod_{i=1}^q (Y_v(t)_{i}^2)^{1/q}  \geq  q \prod_{i=1}^q  \left( X_{v,i}\right)^{2t/q}  \left( X'_{v,i}\right)^{2(1-t)/q} .
    \end{equation}
    The RHS can be expressed in terms of the square root ratios of the neighbors $v_1, \ldots, v_{d'}$ of $v$:
    \begin{align*}
        q \prod_{i=1}^q  \left( X_{v,i}\right)^{ \frac{2t}{q} }  \left( X'_{v,i}\right)^{\frac{2(1-t)}{q}}  &=  q \prod_{i=1}^q  \left( \prod_{j=1}^{d'} F(X_{{v_j} })_i^2 \right)^{ \frac{t}{q} }    \left(  \prod_{j=1}^{d'} F(X'_{{v_j} })_i^2 \right)^{\frac{(1-t)}{q}} \\
        &=  q \left(\prod_{i=1}^q   \prod_{j=1}^{d'} F(X_{{v_j} })_i^2 \right)^{ \frac{t}{q} }    \left(\prod_{i=1}^q  \prod_{j=1}^{d'} F(X'_{{v_j} })_i^2 \right)^{\frac{(1-t)}{q}} \\
        &=  q \left(\prod_{j=1}^{d'} \prod_{i=1}^q    F(X_{{v_j} })_i^2 \right)^{ \frac{t}{q} }    \left(\prod_{j=1}^{d'} \prod_{i=1}^q   F(X'_{{v_j} })_i^2 \right)^{\frac{(1-t)}{q}} \\
        &=  q \left(w^{d'-|\freen|}\prod_{j\in\freen}\prod_{i=1}^q    F(X_{{v_j} })_i^2 \right)^{ \frac{t}{q} }    \left(w^{d'-|\freen|}\prod_{j\in\freen} \prod_{i=1}^q   F(X'_{{v_j} })_i^2 \right)^{\frac{(1-t)}{q}} \\
        &\geq   q w^{\frac{d-|\freen|}{q}}\prod_{j\in\freen}\left(\prod_{i=1}^q    F(X_{{v_j} })_i^2 \right)^{ \frac{t}{q} }    \left( \prod_{i=1}^q   F(X'_{{v_j} })_i^2 \right)^{\frac{(1-t)}{q}}.
    \end{align*}

    Note that $X^2_{v_j,i} / S(X_{v_j})$  and $\left(X'\right)^2_{v_j,i} / S(X'_{v_j})$ are the probabilities that vertex $v_j$ is colored $i$, given boundary conditions $\tau$ and $\tau'$ respectively.
    From the definition of $F$ it then follows that 
    \begin{align*}
    F(X_{v_j})_i^2 &= 1 - (1-w) \prob_{T-v;w}[\Phi(v_j)=i ~|~\tau], \text{ and}
    \\
    F(X'_{v_j})_i^2 &= 1 - (1-w) \prob_{T-v;w}[\Phi(v_j)=i ~|~\tau']  .  
    \end{align*}
Regardless of the boundary condition $\eta \in \{ \tau, \tau'\}$, we may apply Lemma~\ref{lem:prob basic} and Lemma~\ref{lemma: Bernoulli opt} to see that
\begin{align*}
    \prod_{i=1}^q\left(1-(1-w)\prob_{T-v;w}[\Phi(v_j)=i ~|~\eta]\right)\ge (1-(1-w)B(\ell_j))^{\frac{1}{B(\ell_j)}},
\end{align*}
where $\ell_j$ denotes the number of fixed children of $v_j$.
Observe that this quantity only depends on $v_j$ and not on the choice of $X_{v_j}$ or $X'_{v_j}$, since for both boundary conditions $\tau$ and $\tau'$, the vertex $v_j$ has exactly the same fixed neighbors, since $\dist(v,\Delta_{\tau,\tau'})>1$.
  Thus we obtain that
    \[
    S(t)\ge qw^{\frac{d-|\freen|}{q}}\prod_{j\in\freen}(1-(1-w)B(\ell_j))^{\frac{1}{qB(\ell_j)}},
    \]
as desired.

To obtain the other two statements, we have to use the monotonicity established before Lemma~\ref{lem:prob basic}.
\end{proof}

We can now finally provide a bound on $\lambda_v^2(w)$ and prove Proposition~\ref{prop: bound lambda_v2}, which we restate here for convenience.
\boundlambdav*

\begin{proof}
We will write $S(t)\coloneqq S(Y_v(w;t))$ and $\lambda_v=\lambda_v(w)$ throughout the proof.
By definition we can write 
\[
\lambda_v^2=\frac{1}{S(t)(1-\frac{1-w}{S(t)}Y_v(w;t)_i^2)^2},
\]
for some $i\in [q]$ and $t\in[0,1]$, which can then be bounded as
\begin{equation}\label{eq:double S bound}
    \lambda_v^2\le \frac{1}{S(t)(1-\frac{1-w}{S(t)})^2},
\end{equation}
since $Y_v(w;t)_i\le 1$ for any $i\in [q]$ and $t\in[0,1]$.

In what follows we write $S$ instead of $S(t).$
We now start with the proof of (i). 
To further bound it $\lambda_v$ we will use the previous lemma to bound $1/S$ and $(1-\frac{1-w}{S})^{-2}$ separately.

Let us apply Lemma~\ref{lemma: bound on S_n} with $\freen=\emptyset$, noting the expression is decreasing in $|\freen|$, to obtain the bound
    \[
         \lambda_v^2 \le \frac{1}{{S}}\left(1-\frac{1-w}{qw^{d/q}}\right)^{-2}\leq \frac{1}{{S}} \left(1-\frac{1-w}{qw^{(d+1)/q}}\right)^{-2},
    \]
which can be further bounded using Lemma~\ref{lemma:usefull bound} by
    \[
   \frac{1}{S} \left(1-\frac{e}{(d+1)}\frac{a-1/2}{a-1}\right)^{-2}= \frac{1}{S}\left(1-\frac{K(a)}{(d+1)}\right)^{-2}.
    \]
To bound $1/S$, we apply the second part of Lemma~\ref{lemma: bound on S_n} again, without the assumption $\freen = \emptyset$. 
This yields
\begin{align*}
\frac{1}{S}&\leq \frac{1}{q}\left(w^{\tfrac{-d}{q}}\right)^{\tfrac{d-f}{d}}\cdot \left(\left(1-(1-w)B(d)\right)^{\tfrac{-d}{qB(d)}}\right)^{\tfrac{f}{d}}
\\
&\leq \frac{1}{q}\left(w^{\tfrac{-(d+1)}{q}}\right)^{\tfrac{d-f}{d}}\cdot \left(\left(1-(1-w)B(d)\right)^{\tfrac{-(d+1)}{qB(d)}}\right)^{\tfrac{f}{d}},
\end{align*}
which by Lemma~\ref{lemma: power -1/x exp bound} and Lemma~\ref{lemma:usefull bound} can be further bounded by
\[
\frac{1}{q}\left(e\cdot \frac{a-1/2}{a-1}\right)^{\tfrac{d-f}{d}}\cdot \left(e\cdot \frac{a-B(d)/2}{a-B(d)}\right)^{\tfrac{f}{d}}.
\]
Since $x\mapsto \tfrac{a-x/2}{a-x}$ is increasing, we can plug in the upper bound on $B(d)$ from Corollary~\ref{cor: bounds on B(d)} and obtain the desired expression.

To prove part (ii), we use Lemma~\ref{lemma: bound on S_n} with $\freen=\{1,\ldots, d\}$, $\ell_j=0$ for every $j\in\freen$ and Lemma~\ref{lemma: power -1/x exp bound}.
This gives us, 
\begin{align}\label{eq:first bound on S ii}
\frac{1}{S}\leq \frac{1}{q}\left(\left(1-(1-w)B(0)\right)^{\tfrac{-(d+1)}{qB(0)}}\right)\leq\frac{e}{q}\frac{a-B(0)/2}{a-B(0)}.
\end{align}
Similarly,
\begin{align}
\frac{1-w}{S}\leq \frac{e}{d+1} \cdot \frac{a-B(0)/2}{a-B(0)}.\label{eq:second bound on S ii}
\end{align}

Using that $B(0)\leq K/q$ for a constant $K\le \min\{ q,13\}$, by Corollary~\ref{cor: bounds on B(d)} and recalling that $a=(d+1)/q$, we obtain, 
\begin{equation}\label{eq:somewhat precis bound on 1/S}
\frac{a-B(0)/2}{a-B(0)}\leq \frac{d+1-K/2}{d+1-K}=\left(1+\frac{K/2}{d+1-K}\right).
\end{equation}
Using that $K\leq \min\{ q,13\}$, $d+1\geq 6$ and $d\geq q+2$ we get $e\left(1+\tfrac{K/2}{d+1-K}\right)\le\min\{9,q+2\}$ and as $K=e+O(1/d)$, therefore $e\left(1+\frac{K/2}{d+1-K}\right)=e+O(1/d)$. Now by using $K'=\max\left(K,e\left(1+\frac{K/2}{d+1-k}\right)\right)\le \max\{{q+2,13}\}$ we obtain the bound on \eqref{eq:first bound on S ii} and \eqref{eq:second bound on S ii} as
\[
\frac{1}{S}\le \frac{e}{q}\left(1+\frac{K'/2}{d+1-K'}\right) \qquad\textrm{and}\qquad  \left(1-\frac{1-w}{S}\right)^{-2}\le \left(1-\frac{K'}{d+1}\right)^{-2},
\]
thus by substituting these bound into \eqref{eq:double S bound}, we obtain the desired bound. Moreover, by construction we have $K'=e+O(1/d)$.
\end{proof}
\section{Concluding remarks and questions}\label{sec:conclusion}
In this paper we managed to prove weak and strong spatial mixing for the $q$-state Potts model on bounded degree trees for a near optimal range of parameters of the form $w\geq 1-\alpha \tfrac{q}{d+1}$ with $\alpha=1-K/d$, where $d+1$ is the degree bound and $K$ a constant.
Unfortunately, our approach requires $d+1\geq \tfrac{e-1/2}{e-1}q$ for strong spatial mixing. 
It would of course be nice to get rid of this constraint.
Possibly, the local modification of the $2$-norms from~\cite{chen2023strong} could be helpful here, but it is unclear to us how to utilize these in our setting.

We have not attempted to obtain concrete values on the constants $K, K''_q$ for small values of $d$. 
There are several places where one could improve our results for small values of $d$. In particular in the proofs of our main theorems where one could replace the contraction rate of $\tfrac{d}{d+1}$ by something much closer to $1$, in Lemma~\ref{lem: beta bound} where one could directly work with $(\tfrac{d}{d+1})^{d+1}$ rather than bounding it by $e^{-1}$, in Lemma~\ref{lem:prob basic} where it would be better to work with concrete bounds for small values of $d$, and in Lemma~\ref{lemma:usefull bound} and Corollary~\ref{cor: bounds on B(d)} where we simply replaced $d$ by $d+1$ in the respective proofs. 


Our approach for bounded degree trees can be extended to show the partition function of the tree does not vanish on a complex neighbourhood of the interval for which the model has SSM following the approach from~\cite{LSScorrelation}.   
It would be interesting to conclude the same for large girth graphs. 
More precisely, let $d,q\geq 3$ be integers and assume (for simplicity) that $d+1\geq \tfrac{e-1/2}{e-1}q$ and write $\alpha=1-K/d$ where $K$ is the constant from Theorem~\ref{thm:main ssm}. Is it true that there exists a constant $g$ and an open set $U\subset \mathbb{C}$ that contains the interval $(1-\alpha\tfrac{q}{d+1},1]$ such that for any graph $G$ of girth at least $g$ and maximum degree at most $d+1$ its partition function $Z_G$ (defined as in~\eqref{eq:def pf}) satisfies $Z_G(w')\neq 0$ for all $w'\in U$?
If this were true, then this would yield a \emph{deterministic} polynomial time approximation algorithm for computing $Z_G$ via Barvinok's interpolation method~\cite{Bar,PR17}.

\section*{Acknowledgments}
We thank the anonymous referees for their constructive feedback, and for correcting an error in an earlier version.

\bibliographystyle{alphaurl}
\bibliography{main_bib}

\end{document}